\theoremstyle{plain}
\newtheorem{thm}{Theorem}
\newtheorem{lemma}[thm]{Lemma}
\newtheorem{conj}[thm]{Conjecture}
\newcommand{\F}{\mathcal{F}}
\newcommand{\M}{\mathcal{M}}
\newcommand{\B}{\mathcal{B}}
\newcommand{\N}{\mathbb{N}}
\newcommand{\R}{\mathbb{R}}
\newcommand{\C}{\mathcal{C}}
\DeclareMathOperator{\soc}{soc}
\DeclareMathOperator{\lk}{lk}
\begin{document}

\title{$f$-Vectors of Triangulated Balls}
\author{Samuel Kolins\\
Department of Mathematics,\\
Cornell University, Ithaca NY, 14853-4201, USA,\\
skolins@math.cornell.edu}

\maketitle

\begin{abstract}
We describe two methods for showing that a vector can not be the $f$-vector of a homology $d$-ball.
As a consequence, we disprove a conjectured characterization of the $f$-vectors of balls of dimension five and higher due to Billera and Lee.
We also provide a construction of triangulated balls with various $f$-vectors.
We show that this construction obtains all possible $f$-vectors of three and four dimensional balls and we conjecture that this result also extends to dimension five.
\end{abstract}

\section{Introduction}

A fundamental invariant of a simplicial complex is its collection of face numbers or $f$-vector.
A major area of study is understanding the possible $f$-vectors of various types of simplicial complexes.
In this paper we prove some new results on the $f$-vectors of simplicial complexes that are triangulations of balls.

A complete characterization of the $f$-vectors of simplicial polytopes was given in 1981 with the proof of the $g$-theorem by Billera and  Lee \cite{BilleraLee1} and Stanley \cite{Stanleyg}.
The $g$-conjecture asserts that this characterization also holds in the more general setting of all triangulated spheres.
In \cite{BilleraLee2}, Billera and Lee calculate a set of conditions on the $f$-vectors of triangulated balls that would follow from the $g$-conjecture.
Billera and Lee conjecture that these conditions are not only necessary but also sufficient for a characterization of the $f$-vectors of balls.
Recently, Lee and Schmidt confirmed this conjecture for three and four dimensional balls \cite{LeeSchmidt}.

In this paper we present two methods that show that certain vectors are not the $f$-vectors of triangulated balls.
As a consequence, we show that the Billera and Lee conditions are not sufficient in dimensions five and higher.
In both approaches, we assume that a ball with the a certain $f$-vector exists and then show that there must be some way to split the ball along a co-dimension one face to create two new balls.
For some $f$-vectors we can show that the new balls created by this splitting can not exist.
The first technique relates one of the Betti numbers of the face ring of the ball to the existence of a co-dimension one face along which we can split the ball.
This has the advantage of being relatively straightforward to compute in particular examples.
In the second method we look at all possible one skeletons of a ball with a given $f$-vector and show that in each case the desired type of splitting is possible.
This is used to generate an infinite class of counterexamples to the Billera and Lee conjecture in every dimension greater than four. 

The second portion of the paper presents a construction of balls with prescribed $f$-vectors.
In dimensions three and four this result duplicates the work of Lee and Schmidt in obtaining all possible $f$-vectors of balls.
For the dimension five case, we conjecture that this construction gives all possible $f$-vectors.
However, in dimensions higher than five not all $f$-vectors of balls can be obtained with this approach.

The structure of this paper is as follows.
In section \ref{S:Background} we review the needed background material.
In section \ref{S:Known} we discuss previously known and conjectured conditions on the $f$-vectors of balls.
In section \ref{S:nec} we present our methods for creating new necessary conditions on $f$-vectors.
In section \ref{S:Const} we give our construction and in section \ref{S:Sum} we discuss some consequences of the construction as well as our conjecture for the $f$-vectors of five dimensional balls.

\section{Notation and Background}{\label{S:Background}}

We begin by discussing some needed background on simplicial complexes, the face ring, and commutative algebra.
Stanley's book \cite{StanleyCCA} is a good reference for most of the material in this section.

A \emph{simplicial complex} $\Delta$ on the vertex set $[n] = \{1,2,\ldots , n\}$ is a collection of subsets of $[n]$ that is closed under inclusion and contains all of the one element sets $\{i\}$ for $i \in [n]$.
The elements of $\Delta$ are called \emph{faces}, and the \emph{dimension of a face} $F \in \Delta$ is $\dim F := |F|-1$.
The \emph{dimension of $\Delta$} is equal to the maximum of the dimensions of all of its faces.
A \emph{facet} of $\Delta$ is any maximal face with respect to inclusion.
A simplicial complex is \emph{pure} if all of its facets have the same dimension.

If $F \in \Delta$ then the \emph{ link of $F$ in $\Delta$} is $\lk_{\Delta} (F) := \{ G \in \Delta : G \cup F \in \Delta, G \cap F = \emptyset \}$.
For $W \subset [n]$, $\Delta_W := \{ \sigma \in \Delta : \sigma \subset W \}$ is the \emph{induced sub-complex} on the vertex set $W$.

Let $\Delta$ be a simplicial complex of dimension $(d-1)$.
The \emph{face numbers} of $\Delta$, denoted by $f_i (\Delta)$, are the number of $i$-dimensional faces of $\Delta$.
When it is clear to what complex we are referring we will often just write $f_i$ for the face numbers. 
So $f_{-1} = 1$ (corresponding to the empty set), $f_{0} = n$, and $f_i = 0$ for $i \geq d$.
The \emph{$f$-vector} of $\Delta$ is the list of the face numbers, $f(\Delta) := (f_{-1}, f_0, f_1, \ldots , f_{d-1})$.
The \emph{$h$-vector} of $\Delta$, $h(\Delta) = (h_0,h_1, \ldots , h_d)$ contains the same combinatorial information as the $f$-vector but is often easier to use.
Its entries are defined from the face numbers by 
\begin{equation*}
\sum_{i=0}^d h_i x^i = \sum_{i=0}^{d} f_{i-1} x^i (1-x)^{d-i}
\end{equation*}
Define $g_i (\Delta ) := h_i(\Delta) - h_{i-1} (\Delta)$.

Let $\{e_1, e_2, \ldots , e_n\}$ be the standard basis of $\R^n$.
The \emph{geometric realization} $|\Delta|$ of $\Delta$ is the union of the convex hull of $\{e_{i_1}, \ldots , e_{i_k}\}$ over all faces $\{i_1, \ldots , i_k\}$ of $\Delta$.
We say $\Delta$ is \emph{homeomorphic} to a topological space $X$ if $|\Delta|$ is homeomorphic to $X$.
A \emph{triangulation} of a topological space $X$ is a simplicial complex that is homeomorphic to $X$.

In some of our work, instead of considering triangulated balls it will be useful to consider the larger class of homology balls.
All of our homology will be taken with coefficients in the integers.
A pure simplicial complex $\Delta$ of dimension $(d-1)$ is a \emph{homology $(d-1)$-manifold} if for every non-empty face $F \in \Delta$ the link of $F$ has the same homology as the $(d-1-|F|)$-sphere or $(d-1-|F|)$-ball.
The \emph{boundary} of a homology $(d-1)$-manifold $\Delta$ is defined to be $\partial \Delta := \{ F \in \Delta | H_{d-1-|F|} (F) = 0\}$.
From \cite{Mitchell90} we know that the boundary of a homology $(d-1)$-manifold is either empty or a homology $(d-2)$-manifold without boundary.
A \emph{homology $(d-1)$-sphere} is a homology $(d-1)$-manifold with empty boundary and the same homology as the $(d-1)$-sphere.
A \emph{homology $(d-1)$-ball} is an homology $(d-1)$-manifold with the same homology as the $(d-1)$-ball and boundary a homology $(d-2)$-sphere.
From the long exact sequence of the homology of the pair $(\Delta, \partial \Delta)$, for a homology $(d-1)$-ball $\Delta$ we have $H_{d-1} (\Delta, \partial \Delta) = \mathbb{Z}$.

Note that for a $(d-1)$ dimensional simplicial complex, $h_d = \sum_{i=0}^d (-1)^{d-i} f_{i-1} = (-1)^{d-1} \tilde\chi (|\Delta|)$.
In particular, if $\Delta$ is a homology $(d-1)$-ball then $h_d = 0$.

A \emph{shelling order} of a pure simplicial complex $\Delta$ is an ordering $\{F_1, \ldots , F_{f_{d-1}} \}$ of the facets of $\Delta$ such that for $j=1, \ldots , f_{d-1}$, when $F_j$ is added to $\cup_{i=1}^{j-1} F_i$ there is a unique new face that is minimal with respect to inclusion.
This face is called the \emph{restriction face} and is denoted $r(F_j)$.
A complex is \emph{shellable} if there exists a shelling order of its facets.
We can obtain the $h$-vector of a shellable complex from the restriction faces of any shelling order by $h_i = |\{F_j : |r(F_j)|=i \}|$ \cite[Proposition 2.3]{StanleyCCA}.

Let $k$ be an infinite field of arbitrary characteristic.
Define $R:=k[x_1, \ldots, x_n]$.
For a simplicial complex $\Delta$ on vertex set $[n]$ the \emph{face ring} (or \emph{Stanley-Reisner ring}) is

\begin{equation*}
k[\Delta] := R/I_{\Delta}, \ \ \ \ \ \mbox{where  }  I_{\Delta} = (x_{i_1} x_{i_2} \cdots x_{i_k} : i_1 < i_2 < \cdots < i_k, \{i_1, i_2, \cdots , i_k\} \notin \Delta).
\end{equation*}

A simplicial complex $\Delta$ is called \emph{Cohen-Macaulay} if $k[\Delta]$ is Cohen-Macaulay.
In \cite{Reisner}, Reisner gives a characterization of Cohen-Macaulay complexes in terms of the homology of the links of the faces in the complex.
As a consequence of this result, all homology balls and spheres are Cohen-Macaulay.

A \emph{linear system of parameters} (l.s.o.p) for $k[\Delta]$ is a collection of degree one elements $\theta_1, \ldots , \theta_d \in k[\Delta]$ such that $k[\Delta]/(\theta_1, \ldots , \theta_d)$ is finite dimensional over $k$.
For an infinite field $k$ a generic choice of $d$ degree one elements of $k[\Delta]$ is a l.s.o.p.
Define $k(\Delta) := k[\Delta]/(\theta_1, \ldots , \theta_d)$ for any l.s.o.p $\theta_1, \ldots , \theta_d$.

Let $T$ be a graded ring such that $T=R/I$ for some ideal $I$. 
Denote by $T_i$ the $i$th homogeneous component of $T$.
The \emph{Hilbert function} of T is given by $F(T,i) :=  \dim_k T_i$.
For a Cohen-Macaulay complex $F(k(\Delta),i) = h_i$.

Let $S$ be one of the rings $k[\Delta]$ or $k(\Delta)$.
Thinking of $S$ as an $R$-module the minimal free resolution of $S$ has form
\begin{equation*}
 0 \rightarrow \bigoplus_j S[-j]^{\beta_{l,j}} 
 \rightarrow \bigoplus_j S[-j]^{\beta_{l-1,j}}  \rightarrow \cdots
  \rightarrow \bigoplus_j S[-j]^{\beta_{1,j}}
  \rightarrow \bigoplus_j S[-j]^{\beta_{0,j}}
  \rightarrow  S \rightarrow 0
\end{equation*} 
Here $S[-j]$ is the module $S$ shifted by degree $j$ and $l$ is the length of the resolution, also called the \emph{homological dimension} of $S$.
The $\beta_{i,j}$ are called the \emph{Betti numbers} of $S$.
Using the Auslander-Buchsbaum formula \cite[Theorem I.11.2]{StanleyCCA}, for a Cohen-Macaulay complex $\Delta$ the minimal resolution of $k[\Delta]$ has length $n-d$ and the minimal resolution of $k(\Delta)$ has length $n$ (where $n$ is the number of vertices in $\Delta$ and hence also the number of variables in $R$).
The Betti numbers of $k[\Delta]$ are related to the topology of the complex $\Delta$.
One powerful expression of this relationship is Hochster's Formula \cite{Hochster}.

\begin{equation}{\label{Hochster}}
 \beta_{i,j} (k[\Delta]) = \sum_{W \subset V, |W|=j} \dim_k (\tilde{H}_{j-i-1} \, (\Delta_W; k))
\end{equation}

Finally, we turn our attention to terminology related to monomial ideals.
A non-empty set $M$ of monomials is an \emph{order ideal} if for all $m \in M$ and $m' | m$ we have $m' \in M$.
Denote by $M_i$ the monomials in $M$ of degree $i$.
The \emph{degree sequence} of $M$ is the vector $(|M_0|,|M_1|,|M_2|, \ldots )$.
A vector that is the degree sequence of some order ideal of monomials is called an \emph{M-vector}.

Given a monomial $m=X_1^{a_1} X_2^{a_2} \cdots X_n^{a_n}$ in the indeterminates $X_1, X_2, \ldots, X_n$ define the \emph{degree} of $m$ to be $\sum_{i=1}^n a_i$.
For monomials of the same degree define the \emph{lexicographic order} $<_l$ by $X_1^{a_1} X_2^{a_2} \cdots X_m^{a_m} <_l X_1^{b_1} X_2^{b_2} \cdots X_m^{b_m}$ if there exists some $k \in \{1,2,\ldots m\}$ such that $a_i = b_i$ for $i < k$ and $a_k > b_k$.
An order ideal $M$ is called a \emph{lex ideal} if for each $j$ the elements of $M_j$ are the first $|M_j|$ monomials of degree $j$ in the lexicographic order.

Define the \emph{reverse-lexicographic order} (or rev-lex order) $<_{r l}$ on monomials by $X_1^{a_1} X_2^{a_2} \cdots X_m^{a_m} <_{rl} X_1^{b_1} X_2^{b_2} \cdots X_m^{b_m}$ if there exists some $k \in \{1,2,\ldots m\}$ such that $a_i = b_i$ for $i > k$ and $a_k < b_k$.
An order ideal $M$ is \emph{compressed} if for each $j$ the elements of $M_j$ are the first $|M_j|$ monomials of degree $j$ in the rev-lex order.
Given any $M$-vector $h$ there exists a compressed order ideal of monomials with degree sequence equal to $h$ \cite[Proposition 1]{BilleraLee1}.

Let $m$ be a monomial and $c \in \N$ such that the degree of the $m$ is less than or equal to $c$.
There is a unique way to write $m$ as $m = X_{e_1} X_{e_2} \cdots X_{e_c}$ where $ 0 \leq e_1 \leq e_2 \leq \cdots \leq e_c$ and we take $X_0 = 1$.
This is called the \emph{extended representation} of $m$.
Define the \emph{partial order} $<_p$ on monomials of degree less than or equal to $c$ by $ X_{e_1} X_{e_2} \cdots X_{e_c} \leq_p  X_{e_1'} X_{e_2'} \cdots X_{e_c'}$ if and only if $e_k \leq e_k'$ for $k=1, \ldots , c$.
Note that any initial segment of monomials in the rev-lex order is also an initial segment in this partial order. 
For $C \in \N$ we also define a partial order $<_p$ on $C$-subsets of the natural numbers.
If $S = \{i_1, \ldots, i_C\}$ and $T = \{j_1, \ldots , j_C\}$ are $C$- subsets of $\N$ with elements listed in increasing order 
then $S \leq_p T$ if and only if $i_k \leq j_k$ for $k = 1, \ldots, C$. 

In addition to the definition of an $M$-vector in terms of order ideals there is also a numerical characterization of $M$-vectors due to Macaulay \cite{Macaulay}.
Given any $l,i \in \N$ there is a unique expansion of $l$ of the form
$$l = \binom{n_i}{i} + \binom{n_{i-1}}{i-1} + \cdots + \binom{n_j}{j},  \ \ \ \ \ n_i > n_{i-1} > \cdots > n_j \geq j \geq 1.$$
This is called the \emph{$i$-canonical representation of $l$}.
Define $l^{<i>}$, the \emph{$i$th pseudo-power of $l$}, by
$$l^{<i>} = \binom{n_i+1}{i+1} + \binom{n_{i-1}+1}{i} + \cdots + \binom{n_j+1}{j+1}$$
$(h_0, h_1, \ldots)$ is an $M$-vector if and only if $h_0=1$ and $0 \leq h_{i+1} \leq h_i^{<i>}$ for $i \geq 1$.

Another characterization of $M$-vectors is in terms of Cohen-Macaulay complexes.
An integer vector $h = (h_0, h_1, \ldots, h_d)$ is an $M$-vector if and only if there exists a $(d-1)$-dimensional Cohen-Macaulay complex $\Delta$ such that $h (\Delta) = h$.
In fact, this result is also true for the more restrictive class of shellable complexes \cite[Theorem 3.3]{StanleyCCA}.

\section{Known and Conjectured Necessary Conditions on the $h$-vectors of Homology Balls}{\label{S:Known}}

In this section we discuss some of the previously known necessary conditions on the $h$-vectors of homology balls as well as some conjectured conditions on these $h$-vectors.
Many of the results are obtained by examining the relationship between the $h$-vector of a homology ball and the $h$-vector of its boundary homology sphere.
We use the conditions that the $g$-conjecture places on the $h$-vector of the boundary sphere to obtain possible restrictions on the $h$-vector of the original ball.  

\begin{conj}
(The $g$-Conjecture)
An integer vector $(h_0,h_1,\ldots,h_{d})$ with $h_0=1$ is the $h$-vector of a homology $d$-sphere if and only if
\begin{enumerate}
\item $h_i = h_{d-i}$ for $0 \leq i \leq \lfloor d/2 \rfloor$
\item $(1,g_1,g_2, \ldots , g_{\lfloor d/2 \rfloor})$ is an $M$-vector, where $g_i = h_i-h_{i-1}$.
\end{enumerate} 
\end{conj}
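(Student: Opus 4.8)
The statement is an equivalence, so I would split the argument into the sufficiency (``if'') direction and the necessity (``only if'') direction; these have quite different flavors, and only one part is genuinely hard.

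For sufficiency the plan is to realize any vector $(1,h_1,\dots,h_d)$ satisfying conditions (1) and (2) as the $h$-vector of the boundary complex of a simplicial polytope, which is in particular a homology $d$-sphere. Since $(1,g_1,\dots,g_{\lfloor d/2\rfloor})$ is an $M$-vector, I would first fix a compressed order ideal of monomials $M$ with this degree sequence (available by \cite[Proposition 1]{BilleraLee1}). Next I would convert $M$ into a shelling of a $(d+1)$-dimensional polytopal ball whose boundary sphere has the prescribed $h$-vector: one begins from a cyclic polytope, whose boundary supplies the ``first half'' of the $h$-vector, and then uses the monomials of $M$ to prescribe the restriction faces of successively attached facets, the symmetry $h_i=h_{d-i}$ handling the second half. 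This is the Billera--Lee construction; the remaining work is checking that the complex obtained this way is a shellable sphere and that the restriction-face statistic reproduces $h$ via $h_i=|\{F_j:|r(F_j)|=i\}|$.

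For necessity, let $\Delta$ be a homology $d$-sphere. Condition (1) is the Dehn--Sommerville relation, and I would prove it directly: $\Delta$ is Eulerian because every link $\lk_\Delta F$ has the reduced Euler characteristic of a sphere, and the standard generating-function manipulation of $\sum h_i x^i$ under the decomposition into stars and links yields $h_i=h_{d-i}$. For condition (2) I would use that $\Delta$ is Cohen--Macaulay by Reisner's criterion, so that for a generic l.s.o.p.\ $\theta_1,\dots,\theta_d$ one has $\dim_k k(\Delta)_i=h_i$. The heart of the plan is then to exhibit a linear form $\omega\in k(\Delta)_1$ for which multiplication $\times\omega\colon k(\Delta)_{i-1}\to k(\Delta)_i$ is injective for all $i\le d/2$. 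Granting such an $\omega$, it follows that $g_i\ge 0$ for $i\le\lfloor d/2\rfloor$, and that $k(\Delta)/(\omega)$ is a quotient of a polynomial ring whose Hilbert function in degrees $0,\dots,\lfloor d/2\rfloor$ is exactly $(1,g_1,\dots,g_{\lfloor d/2\rfloor})$; Macaulay's characterization then shows this is an $M$-vector, giving condition (2).

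The main obstacle is precisely the existence of this Lefschetz element for an arbitrary homology sphere. When $\Delta$ is the boundary complex of a simplicial polytope, the required injectivity (indeed the full hard Lefschetz isomorphism $\times\omega^{d-2i}\colon k(\Delta)_i\to k(\Delta)_{d-i}$) comes from the hard Lefschetz theorem for the associated projective toric variety, which is how Stanley proved the polytopal case. But for a sphere that is not polytopal --- or not even PL --- there is no toric variety available, and one would need an intrinsic algebraic or combinatorial argument that a generic linear form acts on $k(\Delta)$ with maximal rank in every degree. Supplying that argument (equivalently, proving the generalized lower bound inequalities $g_i\ge0$ together with the $M$-vector bound) is the deep and, at the time of writing, open content of the conjecture; the rest of the plan above is routine given the background of Section~\ref{S:Background}.
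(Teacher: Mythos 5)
You have not given (and could not give) a proof of this statement, and you say so yourself: the statement is stated in the paper as a \emph{conjecture}, and the paper supplies no proof of it. The text only records what is actually known: the ``if'' direction in the polytopal setting and the necessity of condition (1) and of fragments of condition (2). Your outline is an accurate map of that territory --- sufficiency via the Billera--Lee construction from a compressed order ideal, condition (1) from the Dehn--Sommerville relations (valid for all homology spheres, as the paper notes via \cite{Klee64}), the Cohen--Macaulay property giving that $(h_0,\dots,h_d)$ itself is an $M$-vector, and Barnette's and Nevo's results giving that $(1,g_1,g_2)$ is an $M$-vector --- and you correctly isolate the missing ingredient: a generic linear form $\omega$ on $k(\Delta)$ acting injectively $k(\Delta)_{i-1}\to k(\Delta)_i$ for $i\le \lfloor d/2\rfloor$ (a weak Lefschetz element), which for non-polytopal homology spheres has no toric variety behind it. That is precisely the open content of the $g$-conjecture, so your proposal is a statement of the problem's status rather than a proof; within this paper nothing more is claimed either, and the paper is careful to use only the verified special cases of the conjecture in its later arguments (Section \ref{S:Known}).

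One small caution on the part you present as routine: Cohen--Macaulayness alone gives $\dim_k k(\Delta)_i = h_i$ but gives no injectivity of multiplication by a linear form beyond degree $0$, so the passage from ``$\Delta$ is Cohen--Macaulay'' to ``$g_i\ge 0$ and $(1,g_1,\dots,g_{\lfloor d/2\rfloor})$ satisfies Macaulay growth'' rests entirely on the unproven Lefschetz step; even the nonnegativity $g_2\ge 0$ already requires the stronger $2$-Cohen--Macaulay machinery the paper cites (\cite{Barnette73,Nevo08}), not just Reisner's criterion. So the honest summary is: sufficiency and condition (1) are provable as you sketch, condition (2) is known only for $i\le 2$ and for boundaries of simplicial polytopes, and the general case remains open --- which is exactly why the paper labels this statement a conjecture.
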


The $g$-conjecture is not known to hold for all homology spheres (or all triangulated spheres) but has been proved for boundaries of simplicial polytopes \cite{BilleraLee1, Stanleyg}.
The relations of condition one of the $g$-conjecture are called the Dehn-Sommerville equations and are known to hold for all homology spheres \cite{Klee64}.
Additionally, Barnette proved that for all triangulated spheres the initial part $(g_0,g_1, g_2)$ of the $g$-vector is an $M$-vector \cite{Barnette73} (in fact this result is true for the much larger class of all doubly Cohen-Macaulay complexes, which include all homology spheres \cite{Nevo08}).

In order to use these known and conjectured conditions on the $h$-vectors of homology spheres we derive a relationship between the $h$-vector of a homology ball and the $h$-vector of its boundary.
In \cite{Macdonald71}, MacDonald proves a generalization of the Dehn-Sommerville equations for triangulated manifolds with possibly non-empty boundary.
Here we will use the equivalent result for homology manifolds expressed in terms of $h$-vectors \cite[Theorem 3.1]{Novik09}.

\begin{thm}{\label{GDS}}
Let $\Delta$ be a $(d-1)$-dimensional homology manifold with boundary.
Then
$$h_{d-i}(\Delta) - h_i(\Delta) = \binom{d}{i} (-1)^{d-1-i} \tilde\chi (|\Delta|) - g_i(\partial \Delta)$$
for all $0 \leq i \leq d$.
\end{thm}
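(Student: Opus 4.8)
\medskip
\noindent\textit{Proof plan.}\ \ The identity to be proved is exactly the $h$-vector reformulation of MacDonald's generalized Dehn--Sommerville relations for triangulated manifolds with boundary \cite{Macdonald71}; it appears in this form as \cite[Theorem 3.1]{Novik09}, so one option is simply to cite that result, or to translate MacDonald's $f$-vector identities by the invertible linear change of coordinates between $f$-vectors and $h$-vectors applied both to $\Delta$ and to $\partial\Delta$. For a self-contained derivation I would instead reduce to the boundaryless case by \emph{doubling}. Let $D\Delta := \Delta \cup_{\partial\Delta}\Delta$ be the union of two copies of $\Delta$ glued along their common boundary. Since $\partial\Delta$ is a homology $(d-2)$-manifold without boundary (cited above from \cite{Mitchell90}), the link of a face $F\subseteq\partial\Delta$ in $D\Delta$ is the double of the homology ball $\lk_\Delta F$ along its boundary sphere $\lk_{\partial\Delta}F$, hence a homology $(d-1-|F|)$-sphere, while links of interior faces are unchanged; so $D\Delta$ is a closed homology $(d-1)$-manifold.

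The combinatorial core is then bookkeeping among $\Delta$, $\partial\Delta$ and $D\Delta$. Counting faces dimension by dimension gives $f_i(D\Delta) = 2f_i(\Delta) - f_i(\partial\Delta)$. Multiplying by $x^{i+1}(1-x)^{d-1-i}$, summing, and using that $\dim\partial\Delta = d-2$ (so $\sum_i f_{i-1}(\partial\Delta)x^i(1-x)^{d-i} = (1-x)\,h(\partial\Delta,x)$) yields $h(D\Delta,x) = 2h(\Delta,x) - (1-x)h(\partial\Delta,x)$, equivalently
$$h_i(D\Delta) = 2h_i(\Delta) - g_i(\partial\Delta), \qquad 0 \le i \le d,$$
with the conventions $h_{-1}(\partial\Delta)=0$ and $h_d(\partial\Delta)=0$. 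Likewise inclusion--exclusion for Euler characteristics gives $\tilde\chi(D\Delta) = 2\tilde\chi(\Delta) - \tilde\chi(\partial\Delta)$.

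Next I would invoke the Dehn--Sommerville relations for a \emph{closed} homology manifold $\Gamma$ of dimension $e-1$ (Klee's extension of \cite{Klee64}, provable by the same Euler relation $\sum_{G\supseteq F}(-1)^{|G|} = -(-1)^{|F|}\tilde\chi(\lk_\Gamma F)$ that handles the sphere case), namely
$$h_{e-j}(\Gamma) - h_j(\Gamma) = (-1)^j\binom{e}{j}\bigl(\tilde\chi(\Gamma) - (-1)^{e-1}\bigr).$$
Applying this to $D\Delta$ (with $e=d$) and inserting $h_j(D\Delta)=2h_j(\Delta)-g_j(\partial\Delta)$ expresses $2\bigl(h_{d-i}(\Delta)-h_i(\Delta)\bigr)$ in terms of $g_i(\partial\Delta)$, $g_{d-i}(\partial\Delta)$ and $\tilde\chi(D\Delta)$. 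Applying the same relations to the closed homology $(d-2)$-manifold $\partial\Delta$ (with $e=d-1$) and writing $g_{d-i}(\partial\Delta)-g_i(\partial\Delta)=\bigl(h_{d-i}(\partial\Delta)-h_i(\partial\Delta)\bigr)-\bigl(h_{d-1-i}(\partial\Delta)-h_{i-1}(\partial\Delta)\bigr)$ reduces that difference to $-2g_i(\partial\Delta)$ plus a multiple of $\tilde\chi(\partial\Delta)-(-1)^{d-2}$, the two arising binomial coefficients combining by Pascal's rule into $\binom{d}{i}$. Collecting terms, $h_{d-i}(\Delta)-h_i(\Delta)$ becomes $-g_i(\partial\Delta)+(-1)^i\binom{d}{i}\bigl(\tilde\chi(\Delta)-\tilde\chi(\partial\Delta)-(-1)^{d-1}\bigr)$. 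Finally I would eliminate $\tilde\chi(\partial\Delta)$ using the fact that a closed homology manifold of odd dimension has Euler characteristic $0$ (Poincar\'e duality, valid mod $2$ and hence for the rational Euler characteristic regardless of orientability): whichever of $D\Delta,\partial\Delta$ is odd-dimensional then has a known $\tilde\chi$, which forces $\tilde\chi(\partial\Delta)=2\tilde\chi(\Delta)+(-1)^{d-2}$ if $d$ is even and $\tilde\chi(\partial\Delta)=(-1)^{d-2}$ if $d$ is odd; in both cases the parenthesized expression collapses to $(-1)^{d-1}\tilde\chi(\Delta)$, giving the asserted identity for $0\le i\le d$.

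The step I expect to be the real obstacle is not the algebra but the construction of $D\Delta$: the na\"ive double of a simplicial complex along a subcomplex need not itself be simplicial, since faces of the two copies can be identified even when they are not disjoint across the seam. I would handle this by barycentrically subdividing one of the two copies before gluing --- which preserves the homeomorphism type and, after a routine check, the three identities above --- or, as in \cite{Macdonald71}, by running the $f$-vector and Euler-characteristic computations at the chain level without insisting that $D\Delta$ be a simplicial complex. A secondary point to watch is the parity case split in the last step, together with the (easily overlooked) fact that here $g_0(\partial\Delta)=h_0(\partial\Delta)=1$ rather than $0$, since $\partial\Delta\neq\emptyset$.
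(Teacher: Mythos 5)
The paper does not prove Theorem \ref{GDS} at all: it is quoted as a known result, with the $f$-vector version attributed to MacDonald \cite{Macdonald71} and the $h$-vector formulation taken verbatim from \cite[Theorem 3.1]{Novik09}. So your first option --- simply citing that theorem --- coincides exactly with what the paper does, and for the paper's purposes nothing more is needed. Your self-contained doubling derivation is a genuinely different route, and its combinatorial core checks out: the relation $h_i(D\Delta)=2h_i(\Delta)-g_i(\partial\Delta)$, the closed-manifold Dehn--Sommerville relations in the form you state, your grouping of $g_{d-i}(\partial\Delta)-g_i(\partial\Delta)$ with Pascal's rule, and the final parity elimination of $\tilde\chi(\partial\Delta)$ all combine correctly to give the asserted identity (the cases $i=0,d$ just need the conventions $h_{-1}(\partial\Delta)=h_d(\partial\Delta)=0$ and $\binom{d-1}{-1}=0$, and $g_0(\partial\Delta)=1$ as you note).

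The one genuine gap is the patch you propose for the fact that $D\Delta$ need not be a simplicial complex. Barycentrically subdividing one copy does not repair this: the boundary of the subdivided copy is the subdivision of $\partial\Delta$, which is not simplicially isomorphic to $\partial\Delta$, so the two copies can no longer be glued along a common boundary complex; and any subdivision changes $f$- and $h$-vectors, so even if one formed a simplicial double of a subdivision, the three identities would involve the $h$-vector of the subdivided complex rather than $h(\Delta)$ and would not yield the theorem for $\Delta$. The workable repair is your second suggestion made precise: the double is a simplicial poset whose cells are simplices and whose face counts satisfy $f_i(D\Delta)=2f_i(\Delta)-f_i(\partial\Delta)$ by construction, and the standard proof of the closed-manifold Dehn--Sommerville relations --- summing the Euler relation $\sum_{G\supseteq F}(-1)^{|G|}=-(-1)^{|F|}\tilde\chi(\lk F)$ over faces --- is purely local and never uses that distinct faces have distinct vertex sets, so it applies verbatim to $D\Delta$; alternatively one can skip the double entirely and run that same link computation directly on $\Delta$, separating interior from boundary faces, which is in effect MacDonald's original argument. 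You should also spell out why $\lk_{D\Delta}(F)$ is a homology sphere for $F\in\partial\Delta$: this uses $\lk_{\partial\Delta}(F)=\partial(\lk_\Delta F)$ and a Mayer--Vietoris argument for the double of a homology ball, which requires the structural facts from \cite{Mitchell90} rather than only the paper's definition of a homology manifold.
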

  
In the case where $\Delta$ is a $(d-1)$ dimensional homology ball, this reduces to $h_{i}(\Delta) - h_{d-i}(\Delta) = g_i(\partial \Delta)$.
Let $\Delta_k$ be the cone over $\Delta$ taken $k$ times.
Then $\Delta_k$ is a $(d-1+k)$ dimensional homology ball with boundary a $(d-2+k)$ dimensional homology sphere.
Following the argument of Billera and Lee \cite[Corollary 3.14]{BilleraLee2} yields 
$g_i (\partial \Delta_k) = h_i (\Delta_k) - h_{d+k-i} (\Delta_k) = h_i (\Delta) - h_{d+k-i} (\Delta)$, $0 \leq i \leq d+k$.
Combining this result with the $g$-conjecture yields the following set of conditions.

\begin{conj}{\label{gconditions}}
If $(h_0, \ldots, h_d)$ is the $h$-vector of homology $(d-1)$-ball and we take $h_i=0$ for $i>d$ then 
$(h_0-h_{d+k}, h_1 - h_{d+k-1}, \ldots , h_m-h_{d+k-m})$ is an $M$-vector for $k=0, \ldots ,d+1$, $m= \lfloor (d+k-1)/2 \rfloor$.
\end{conj}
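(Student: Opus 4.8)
The plan is to obtain the conditions of Conjecture~\ref{gconditions} as a formal consequence of the $g$-conjecture, using the iterated cones $\Delta_k$ and Theorem~\ref{GDS}, exactly as hinted in the paragraph preceding the statement. So fix a homology $(d-1)$-ball $\Delta$, and for $k\ge 0$ let $\Delta_k$ denote the $k$-fold cone over $\Delta$, with $\Delta_0=\Delta$.

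First I would assemble three standard facts about coning. (1) A cone over a homology ball is again a homology ball of one higher dimension, so $\Delta_k$ is a homology $(d+k-1)$-ball; moreover $\partial\Delta_k$ is a homology sphere of dimension $d+k-2$, so its $h$-vector has entries $h_0,\dots,h_{d+k-1}$, with the usual symmetry $h_i=h_{d+k-1-i}$. (2) Coning does not change the $h$-vector: the faces of a cone over $\Gamma$ are the faces of $\Gamma$ together with those same faces each enlarged by the apex, giving $f_i=f_i(\Gamma)+f_{i-1}(\Gamma)$, and substituting this into the defining identity for the $h$-vector shows the $h$-polynomial is unchanged; hence $h_i(\Delta_k)=h_i(\Delta)$ for $i\le d$ and $h_i(\Delta_k)=0$ for $d<i\le d+k$. (3) A homology ball is acyclic, so $\tilde\chi(|\Delta_k|)=0$.

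Now apply Theorem~\ref{GDS} to the homology $(d+k-1)$-ball $\Delta_k$: since $\tilde\chi$ vanishes, it collapses to $h_i(\Delta_k)-h_{d+k-i}(\Delta_k)=g_i(\partial\Delta_k)$ for $0\le i\le d+k$, and by fact (2) this reads $g_i(\partial\Delta_k)=h_i(\Delta)-h_{d+k-i}(\Delta)$, with the convention $h_j(\Delta):=0$ for $j>d$ (in particular $g_0(\partial\Delta_k)=1$, using $h_d(\Delta)=0$ in the case $k=0$). Since $\partial\Delta_k$ is a homology sphere whose $h$-vector runs $h_0,\dots,h_{d+k-1}$, condition~(2) of the $g$-conjecture applied to it states that $(1,g_1(\partial\Delta_k),\dots,g_m(\partial\Delta_k))$ is an $M$-vector with $m=\lfloor(d+k-1)/2\rfloor$. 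Substituting the formula for $g_i(\partial\Delta_k)$ gives exactly that $(h_0-h_{d+k},\,h_1-h_{d+k-1},\,\dots,\,h_m-h_{d+k-m})$ is an $M$-vector, which is the assertion for that value of $k$. Running $k$ over $0,1,\dots,d+1$ finishes the proof; in fact the argument works for every $k\ge 0$, but once $k\ge d+1$ all subtracted terms $h_{d+k-i}$ with $i\le m$ already vanish, so those conditions merely say an initial segment of $(h_0,\dots,h_d)$ is an $M$-vector, which is implied by the $k=d+1$ case — hence the cutoff at $k=d+1$.

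The point to stress is that there is no real obstacle here beyond careful bookkeeping: the statement is a tautological unwinding of the $g$-conjecture for the spheres $\partial\Delta_k$ together with the generalized Dehn--Sommerville relations, and the only things needing care are the index shifts (distinguishing the dimension of $\partial\Delta_k$ from the length of its $h$-vector) and checking that coning genuinely preserves ``homology ball with homology-sphere boundary.'' The flip side is that these conditions cannot presently be established \emph{unconditionally}, since they would force the $g$-conjecture for each $\partial\Delta_k$; and, as the rest of the paper shows, even taken together they are not sufficient to characterize $f$-vectors of balls in dimension five and higher.
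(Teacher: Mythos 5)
Your derivation is correct and follows essentially the same route as the paper: cone the ball $k$ times, note that coning preserves the $h$-vector and the homology-ball/boundary-sphere structure, apply Theorem~\ref{GDS} (with $\tilde\chi=0$) to get $g_i(\partial\Delta_k)=h_i(\Delta)-h_{d+k-i}(\Delta)$, and then invoke condition (2) of the $g$-conjecture for each boundary sphere $\partial\Delta_k$. Your closing caveat also matches the paper's stance: the statement is a conjecture precisely because it rests on the unproved $g$-conjecture, with only certain cases (e.g.\ $k\ge d-3$, $k=d+1$, and the length-three initial segments via Barnette) known unconditionally.
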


Even though the $g$-conjecture has not been settled for the case of homology spheres, many of the conditions in Conjecture \ref{gconditions} can be verified.

The case $k=d+1$ is the statement that the original $h$-vector of the ball, $(h_0, \ldots, h_d)$, is an $M$-vector.
Since homology balls are Cohen-Macaulay complexes their $h$-vectors are $M$-vectors.

Next consider the cases $d-3 \leq k \leq d$.
For $k > d-3$, since $h_d (\Delta) =0$ for any homology ball $\Delta$ Conjecture \ref{gconditions} reduces to the fact that the $h$-vector of $\Delta$ is an $M$-vector, which was discussed above.
For the case $k=d-3$ we must show that $(h_0, h_1, \ldots, h_{d-3}, h_{d-2}-h_{d-1})$ is an $M$-vector.
Since we already know that the $h$-vector of $\Delta$ is an $M$-vector, we only need to show that $h_{d-2} - h_{d-1}$ is non-negative.
As a consequence of \cite[Corollary 4.29]{Swartz09} $h_{d-2} \geq h_{d-1}$ for any homology ball, giving the desired inequality.

Finally, from Barnette's result the initial segments $(h_0-h_{d+k}, h_1 - h_{d+k-1}, h_2-j_{d+k-2})$ of the vectors in Conjecture \ref{gconditions} are $M$-vectors.

Combining these known results shows that all of the conditions in Conjecture \ref{gconditions} are valid for homology balls of dimension less than or equal to four.

\section{A New Type of Necessary Condition}{\label{S:nec}}

In \cite[Conjecture 5.1]{BilleraLee2}, Billera and Lee conjecture that the conditions of Conjecture \ref{gconditions} are also sufficient for the $h$-vectors of triangulated balls (in fact, they conjecture that the conditions are sufficient for the more restrictive collection of polyhedral balls).
This conjecture was verified for balls of dimension three and four by Lee and Schmidt \cite{LeeSchmidt}.
In this section we disprove the sufficiency of the conditions in Conjecture \ref{gconditions} for homology balls of dimension five and higher.

\subsection{The Betti Diagram of the Face Ring}{\label{S:Betti}}

Our first goal will be to understand the upper right entry of the Betti diagram of the face ring modulo a linear system of parameters, $\beta_{n,n+1} (k(\Delta))$.
From Hochster's Formula (\ref{Hochster})

\begin{equation}{\label{socle}}
 \beta_{n-d,n-d+1} (k[\Delta]) = \sum_{W \subset [n], |W|=n-d+1} \dim_k (\tilde{H}_{0} \, (\Delta_W, k))
\end{equation}

From \cite[Lemma 3.7]{Hibi94}, the group $\tilde{H}_{0} \, (\Delta_W, k)$ in equation (\ref{socle}) is trivial whenever $[n] \backslash  W$ is not a face of $\Delta$.
Therefore, $\beta_{n-d,n-d+1} (k[\Delta])$ is non-zero if and only if there exists a $(d-2)$-face of $\Delta$ whose removal disconnects the complex.
If $\Delta$ is a homology ball then Mayer-Vietoris sequences show that the two pieces created by removing this $(d-2)$-face are also homology balls.

Further, $\beta_{n-d,n-d+1} (k[\Delta])$ is equal to the dimension of the degree one portion the socle of $k(\Delta)$ \cite[Theorem 12.4]{StanleyCCA}.
Since $k(\Delta)$ has homological dimension $n$, the dimension of the degree $i$ portion of the socle of $k(\Delta)$ is given by the Betti number $\beta_{n,n+i} (k(\Delta))$.
Combining these facts gives
\begin{equation*}{\label{E:Betti}}
\beta_{n-d,n-d+1} (k[\Delta]) = \dim_k (\soc k(\Delta))_1 = \beta_{n,n+1} (k(\Delta))
\end{equation*}

Since the Hilbert Function of $k(\Delta)$ is equal to the $h$-vector of $\Delta$, we can use Peeva's cancellation technique  to relate the $h$-vector of $\Delta$ to the Betti numbers of $k(\Delta)$.
Let $L$ be the lexicographic ideal such that the Hilbert function of $R/(L)$ is equal to the $h$-vector of $\Delta$.  By \cite[Theorem 1.1]{Peeva}, the Betti number $\beta_{n,n+1} (k(\Delta))$ is bounded above by $\beta_{n,n+1} (R/(L))$ and below by $\beta_{n,n+1} (R/(L)) - \beta_{n-1,n+1} (R/(L))$.
Therefore, when $\beta_{n,n+1} (R/(L)) - \beta_{n-1,n+1} (R/(L)) > 0$ there exists a co-dimension one face that divides our homology ball into two separate homology $(d-1)$-balls.

\subsection{Splitting Balls}{\label{S:glue}}{\label{S:split}}

Next we investigate the effect on the $h$-vector when a homology ball is split along a single co-dimension one face.
Let $\Delta_1$ and $\Delta_2$ be two homology $(d-1)$-balls that can be joined along a common homology $(d-2)$-ball $B$ to form a single homology $(d-1)$ ball $\Delta$.
On the level of $f$-vectors 
\begin{equation*}
f_i(\Delta_1) + f_i (\Delta_2) = f_i (\Delta) + f_i (B)
\end{equation*}
A straightforward calculation shows

\begin{equation}{\label{E:combine}}
h_i(\Delta_1) + h_i (\Delta_2) = h_i (\Delta) + (h_i (B) - h_{i-1} (B) )
\end{equation}
where we take $h_{-1} (B) = h_d (B) = 0$.
As a special case of equation (\ref{E:combine}), when we join two homology balls along a single co-dimension one face the $h$-vector of the resulting complex is the sum of the $h$-vectors of the two component homology balls but with $h_0$ reduced by one (causing $h_0$ of the resulting complex to still equal one) and $h_1$ increased by one.

With these tools, we now consider the $h$-vector $(1,4,5,7,3,2,0)$.
Assume there is a homology five-ball $\Delta$ with this $h$-vector.
Using the result of Eliahou and Kervaire \cite[section 3]{EliahouKervaire90}, we calculate the Betti numbers of the $R/(L)$, where $L$ is the lexicographic ideal such that the Hilbert function of $R/(L)$ is equal to $(1,4,5,7,3,2,0)$.
This yields $\beta_{n,n+1} (R/(L)) =1 $ and $\beta_{n-1,n+1} (R/(L)) = 0$, 
which by Peeva's results implies $\beta_{n,n+1} (k(\Delta)) =1$.
Therefore there exists a co-dimension one face along which we can split $\Delta$.

Using our formula for the $h$-vector obtained by combining homology balls, we now look for possible $h$-vectors for the two   homology five-balls created when we split our original homology ball.
Each of the two smaller homology balls must satisfy all of the known portions of Conjecture \ref{gconditions} as discussed in section \ref{S:Known}.

The only two pairs of options for the $h_1$ values of our smaller homology balls are 2,1 or 3,0.
If we take 2 and 1 as the $h_1$ values the largest possible corresponding values of $h_2$ are 3 and 1, which do not sum to the $h_2$ value of our original ball.
If we take 3 and 0 as our $h_2$ values then one homology ball must have $h$-vector $(1,3,5,7,3,2,0)$.
However, the $g$-vector of the boundary homology four-sphere of this ball would be (1,1,2), violating a known portion of the $g$-conjecture.

We have shown that there must be a division of our homology five-ball into two smaller homology balls, but also that no such division exists.
Therefore $(1,4,5,7,3,2,0)$ is not the $h$-vector of a homology five-ball, even though it satisfies all of the conditions of Conjecture \ref{gconditions}.

Similar calculations show that there are other $h$-vectors that satisfy the conditions of Conjecture \ref{gconditions} but are not the $h$-vectors of homology balls.  
Examples include $(1,4,6,9,4,2,0)$ and $(1,5,6,8,4,3,0)$.

\subsection{A combinatorial approach}{\label{S:Comb}}

Some of the results of the previous section can also be obtained using a more combinatorial viewpoint.
Given an $h$-vector $(1,h_1, \ldots ,h_d)$ look at all of the possible graphs with $h$-vector $(1,h_1,h_2)$.
For each of these graphs count the maximum number of triangles possible in a simplicial complex with the given graph as its one-skeleton (or equivalently, the number of triangles in the flag complex or clique complex induced by the graph).
Compute the $h$-vector $(1,h_1,h_2,h_3')$ of the complex with all possible triangles.
Since removing triangles from a complex $\Delta$ decreases $h_3(\Delta)$ and adding faces of dimension greater than two does not change $h_3 (\Delta)$, any one skeleton of a complex with $h$-vector $(1,h_1, \ldots ,h_d)$ must have $h_3' \geq h_3$.

Doing an exhaustive search we find that in the case of the $h$-vector $(1,4,5,7,3,2,0)$ the only graphs that obtain $h_3' \geq h_3$ have a vertex of degree less than or equal to five.
In a homology ball each vertex must be contained in at least one facet.
This forces each vertex in a homology five-ball to have degree at least five.
If a vertex has degree exactly five then it is contained in only one facet.
In this case we can remove the facet to create a homology five-ball with $h$-vector $(1,3,5,7,3,2,0)$.
As argued in the previous section, this contradicts the known conditions of Conjecture \ref{gconditions}.
Therefore each vertex must have degree at least six, so no homology five-ball with $h$-vector $(1,4,5,7,3,2,0)$ exists.

Using this approach, we next describe a collection of $h$-vectors that satisfy all of the conditions of Conjecture \ref{gconditions} but such that the existence of a homology ball with one of these $h$-vectors would contradict Barnette's partial proof of the $g$-conjecture.
This will show that the conditions of Conjecture \ref{gconditions} are not sufficient in dimensions five and higher.

Let $x,y$ be integers with $x > 4$ and $1 < y < x$.
Consider the $h$-vector

$$\left (1, x , \binom{x}{2} , \binom{x+1}{3} - 2 ,  \binom{x+1}{3} - 2, \ldots ,
  \binom{x+1}{3} - 2 ,  \binom{x}{2} - \left( \binom{y}{2} +1 \right), x - y , 0 \right)$$

We claim that this satisfies all of the conditions in Conjecture \ref{gconditions}.

The case $k=0$ (the boundary sphere condition) requires that
$$\left( 1 , y , \binom{y}{2} + 1 \right)$$
is an $M$-vector, which follows since $\binom{y}{2} < \binom{y+1}{2}$.

For the case $k=1$ (the condition that comes from taking a cone) 
$$\left(1, x , \binom{x}{2} - x + y , \binom{x+1}{3} - \binom{x}{2} + \binom{y}{2} -1 \right)$$
must be an $M$-vector.

To see this, first note that $\binom{x}{2} - x + y = \binom{x}{2} - \binom{x-1}{1}  + y -1 = \binom{x-1}{2} + \binom{y-1}{1}$.

Therefore $\left( \binom{x}{2} - x + y \right)^{<2>} = \binom{x}{3} + \binom{y}{2}
= \binom{x}{3} + \binom{x}{2} - \binom{x}{2} + \binom{y}{2}
= \binom{x+1}{3} - \binom{x}{2} + \binom{y}{2}$.
Combining this with the fact that $x > y$ shows that the desired vector is an $M$-vector.

For the case $k=2$ 
$$\left(1, x , \binom{x}{2}, \binom{x+1}{3} - 2 - x + y, \binom{x+1}{3} - \binom{x}{2} + \binom{y}{2} - 1 \right)$$
must be an $M$-vector.

Since $x > y$, the step from the second to third entry satisfies Macaulay's condition.
Note that since $x > y >1$ and $x>4$, $\binom{x}{2} - \binom{y}{2} = \frac{1}{2} ( x(x-1) -y(y-1)) > \frac{1}{2} ( x(x-1) -y(x-1)) > x - y$.
Therefore, the step from the third to fourth entry is non-increasing.
All of the remaining checks of Macaulay's conditions and non-negativity needed to show that the desired vector is an $M$-vector are straightforward.

All of the higher $k$ values result in vectors of one of the forms
$$\left(1, x , \binom{x}{2}, \binom{x+1}{3} -2 , \ldots , \binom{x+1}{3} -2, \binom{x+1}{3} - 2 - x + y, \binom{x+1}{3} - \binom{x}{2} + \binom{y}{2} - 1 \right),$$
$$\left(1, x , \binom{x}{2}, \binom{x+1}{3} -2 , \ldots , \binom{x+1}{3} -2, \binom{x+1}{3} - 2 - x + y,  \binom{x}{2} - \binom{y}{2} - 1 \right),$$
$$\left(1, x , \binom{x}{2}, \binom{x+1}{3} -2 , \ldots , \binom{x+1}{3} -2,  \binom{x}{2} - x - \left(\binom{y}{2} -y \right) - 1 \right),$$
or the original $h$-vector itself.
Using the same arguments as in the previous cases, these are also all $M$-vectors.

Calculating the corresponding $f$-vector yields
\begin{equation*}
f_0 = d+x \ \ \ \ \ \ \ \ 
\binom{d+x}{2}-f_1 = x \ \ \ \ \ \ \ \ 
\binom{d+x}{3}-f_2 = \frac{x^2}{2} + \left( \frac{2d-3}{2} \right) x + 2
\end{equation*}

Let $\Delta$ be a homology $(d-1)$-ball with the above $h$-vector.
Call $(i+1)$-subsets of $[d+x]$ that are not elements of $\Delta$ absent $i$-faces.
Thus $\binom{d+x}{i+1} - f_i$ counts the number of absent $i$-faces of $\Delta$.

We claim that every vertex of $\Delta$ has degree at least $d$.
To see this, first note that any vertex of degree less than $(d-1)$ would not be contained in any facet of our complex.
If there was a vertex of degree $(d-1)$ this vertex would be contained in exactly one facet.
Removing this facet from our homology ball would decrease $h_1$ by one, leaving us with a homology ball whose boundary homology sphere would have $g$-vector $(1 , y -1 , \binom{y}{2} +1)$, contradicting Barnette's result.

Let $G$ be the graph of absent edges of $\Delta$.
The vertex set of $G$ is $[d+x]$, the same as the vertex set of $\Delta$, and $\{a,b\}$ is an edge of $G$ if and only if $\{a,b\} \notin \Delta$.
By the above claim the maximum degree of any vertex in $G$ is $x-1$.

For each edge $\{a,b\}$ of $G$ and each vertex $c \notin \{a,b\}$ the triangle $\{a,b,c\}$ is an absent triangle.
If $G$ has no vertex of degree at least two, then for each combination of an absent edge and a vertex not in that edge there is a distinct absent triangle.
This results in a total of at least $x (x+d-2)$ absent triangles, far more than the $\frac{x^2}{2} + \left( \frac{2d-3}{2} \right) x + 2$ allowed absent triangles.
We can therefore assume that $G$ has some vertex $v$ of degree $k$ where $2 \leq k <x$.

Label the edges of $G$ by $e_1, e_2, \ldots , e_x$ where $v$ is contained in $e_i$ for $1 \leq i \leq k$.
Let $G_k$ be the graph on vertex set $[x+d]$ with edges $\{e_j\}_{j=1}^k$.
Let $A_k := \{ \{a,b,c\} \subset [x+d] : a,b,c$  distinct and $G_k$ contains 
at least one of the edges  $\{a,b\}, \{b,c\}, \{a,c\}  \}$.
Then $|A_x|$ is less than or equal to the number of absent triangles in $\Delta$.
 
We now compare the sets $A_{i-1}$ and $A_i$ for $i=1, \ldots ,x$.
First note that $A_{i-1} \subseteq A_i$. 
When moving from $A_{i-1}$ to $A_i$ the new elements are those containing the two endpoints of $e_i$ and one other vertex which is not adjacent in $G_{i-1}$ to either endpoint of $e_i$.
For $i \leq k$ this implies $|A_i \backslash A_{i-1}| = x+d-1-i$.
For $i \geq k+1$ the graph $G_{i-1}$ has $i-(k+1)$ edges that do not contain $v$.
Therefore there are at least $(x+d-3-(i-(k+1))$ vertices that are not in $e_i$ and are not adjacent in $G_{i-1}$ to either endpoint of $e_i$.
Thus $A_i \backslash A_{i-1}$ contains at least $(x+d-3-(i-(k+1))$ elements.
In total, $|A_x|$ is bounded below by
\begin{align*}
& (x+d-2)+ (x+d-3) + \cdots + (x+d-1-k) + (x+d-3) + \cdots + (x+d-3-(x-(k+1)) \\
&= \left[ (x+d-2)+ (x+d-3) + \cdots + (d-1) \right] + (k-1)(x-k) \\
&= \frac{x^2}{2} + \frac{2d-3}{2} \cdot x + (k-1)(x-k)
\end{align*}
Since $x>4$ and $k>1$, $(k-1)(x-k) \geq 3$.
Therefore $|A_x| > \binom{d+x}{3}-f_2$ which means there is at least one too many absent triangles and no homology ball with this $h$-vector exists.

\section{Construction Methods}{\label{S:Const}}

In this section we present a method for constructing balls with a large variety of different $h$-vectors.
The main theorem of the section is the following:
(we address the case where the dimension $(d-1)$ is even first; the small alterations needed in the case where $(d-1)$ is odd are discussed later in the section)

\begin{thm}{\label{T:const}}
Let $(d-1)$ be even and let $(1,h_1,h_2, \ldots , h_{d-1}, 0)$ satisfy the following conditions:

$\bullet$
$(1,h_1-1,h_2-h_1,\ldots,h_{(d-3)/2} - h_{(d-5)/2},  \max \{h_{(d-1)/2} - h_{(d-3)/2},0\} )$ is an M-vector.

$\bullet$
$(1,h_1-h_{d-1},h_2-h_{d-2}, \ldots , h_{(d-1)/2} - h_{(d+1)/2})$ is an M-vector.

$\bullet$
$h_{(d+1)/2}\geq h_{(d+3)/2} \geq \cdots \geq h_{d-1}$.

\noindent Then there exists a triangulated $(d-1)$-ball with $h$-vector $(1,h_1,h_2, \ldots , h_{d-1}, 0)$.
\end{thm}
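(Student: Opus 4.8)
The plan is to realize $\Delta$ as an initial segment of a shelling of the boundary of a Billera--Lee simplicial polytope. Recall that if $F_1,\dots,F_N$ is a shelling of a simplicial $(d-1)$-sphere $S$, then every proper initial segment $F_1,\dots,F_k$ spans a shellable $(d-1)$-ball whose $h$-vector records how many of $F_1,\dots,F_k$ have restriction face of each size, and that the unique facet of $S$ whose restriction face equals the whole facet must be shelled last (otherwise the preceding initial segment would be a ball with $h_d\neq 0$). Hence it suffices to produce an ambient polytopal $(d-1)$-sphere $S$ and a shelling of it whose initial segment of length $\sum_{i=0}^{d-1}h_i$ contains exactly $h_i$ facets with restriction face of size $i$ for each $i$; that initial segment then has $h$-vector $(1,h_1,\dots,h_{d-1},0)$ and the prescribed $f$-vector follows.

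First I would pin down the $h$-vector $\bar h$ of the ambient sphere. Set $\bar h_i=h_i$ for $0\le i\le (d-3)/2$, set $\bar h_i=h_{d-i}$ for $(d+3)/2\le i\le d$, and set $\bar h_{(d-1)/2}=\bar h_{(d+1)/2}=\max\{h_{(d-1)/2},h_{(d-3)/2}\}$. This $\bar h$ is symmetric, and a direct computation shows that its first-half difference vector $(1,\bar h_1-\bar h_0,\dots,\bar h_{(d-1)/2}-\bar h_{(d-3)/2})$ is exactly the vector of the first hypothesis, hence an $M$-vector; by the $g$-theorem \cite{BilleraLee1,Stanleyg} there is a simplicial $d$-polytope $Q$ with $h(\partial Q)=\bar h$, and I would take $S=\partial Q$ to be the corresponding Billera--Lee polytope, whose facets and restriction faces admit an explicit combinatorial description. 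The second hypothesis is used here too: its entries are nonnegative, which gives $h_i\ge h_{d-i}$ for $i\le (d-1)/2$, and combining this with the third hypothesis in the upper half one checks that $\bar h_i\ge h_i$ and $\bar h_i\ge h_{d-i}$ for all $i$; this guarantees that both the prospective initial-segment ball $\Delta$ and its complement in $S$ have nonnegative $h$-vectors. Finally, the boundary homology $(d-2)$-sphere of any such initial-segment ball automatically has $g$-vector $(h_i(\Delta)-h_{d-i}(\Delta))_i=(h_i-h_{d-i})_i$ by the $k=0$ case of Theorem~\ref{GDS}, which is precisely the vector of the second hypothesis, so that condition is forced rather than imposed.

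The crux is the remaining step: arranging the shelling of $S$ so that the sequence of $h$-vectors of its initial segments, which runs from $(1,0,\dots,0)$ to $\bar h$ by single-coordinate increments, passes through the target $(1,h_1,\dots,h_{d-1},0)$. Since $\bar h\ge h$ coordinatewise and $h_d=0<1=\bar h_d$, the target lies below $\bar h$ and omits the last facet, as it must; the work is to show this is sufficient. Using the explicit restriction faces of the Billera--Lee sphere I would establish a realizability lemma: there is a shelling visiting the target as soon as the upper profile $(h_{(d+1)/2},\dots,h_{d-1})$ is weakly decreasing and bounded above by the corresponding entries of $\bar h$. The lower part of the profile is essentially forced, since $\bar h_i=h_i$ for $i\le (d-3)/2$ and one must front-load all of those facets; the content of the lemma is that the facets whose restriction faces exceed the middle size can then be fed into the shelling only in an order for which the partial counts in degrees $(d+1)/2,\dots,d-1$ come out weakly decreasing --- which is exactly the third hypothesis. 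Granting this, the initial segment of length $\sum_i h_i$ is the desired homology $(d-1)$-ball. I expect this realizability lemma to be the main obstacle; the rest is bookkeeping with $M$-vectors and standard facts about shellings. When $(d-1)$ is odd, the only changes needed are in the definition of $\bar h$ at the two central indices and in the corresponding form of the realizability lemma, which is why that case is treated separately later.
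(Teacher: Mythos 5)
Your setup matches the paper's: you build the same ambient Billera--Lee sphere (the paper realizes it as $\partial B(I)$ for the compressed order ideal $I$ with degree sequence given by the first hypothesis, so that $h(\partial B(I))=(1,h_1,\ldots,\max\{h_{(d-1)/2},h_{(d-3)/2}\},\ldots,h_1,1)$), and you correctly reduce the problem to exhibiting inside it a suitable sub-ball whose complement (equivalently, an initial shelling segment) has the target $h$-vector; the paper does exactly this via Lemma~\ref{L:const}, constructing an inner $(d-1)$-ball $\B$ with $h$-vector $(1,G_1,\ldots,G_{(d-1)/2})$, $G_i=h_i-h_{d-i}$. But the step you label the ``realizability lemma'' is not an obstacle to be granted --- it \emph{is} the theorem, and your proposal does not prove it. The paper's entire technical content lies there: facets of $\partial B(I)$ are encoded by monomials via a shifted correspondence $\alpha'$, the sets $\M_k$ are chosen degree by degree from ``type one'' elements $Y_1\cdot m$ and ``type two'' elements coming from $I$, the count $G_k+g_{k+1}=h_{k+1}-h_{d-k}\ge G_{k+1}$ (this is where the third hypothesis enters), and Claims 1--4 verify that the chosen monomials form an initial segment in Kalai's partial order $<_p$, so that the corresponding facets form a shellable ball. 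None of this bookkeeping is supplied or replaced by an alternative argument in your sketch.

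Two further points indicate the gap is not merely one of detail. First, you assert the second hypothesis is ``forced rather than imposed'' and use only the nonnegativity of its entries; in fact the full $M$-vector property of $(1,h_1-h_{d-1},\ldots,h_{(d-1)/2}-h_{(d+1)/2})$ is used essentially in the paper's Claims 3 and 4 (to guarantee that dividing by $Y_1$ or by the smallest variable lands back in $\M_k$), so a realizability lemma whose hypotheses are only ``upper profile weakly decreasing and bounded by $\bar h$'' is not what can be proved --- the boundary of the prospective ball is not polytopal, so its $g$-vector being an $M$-vector is not automatic from known results. Second, the case $h_{(d-1)/2}<h_{(d-3)/2}$ requires a genuinely different move in the paper (redefining the $G_i$ up to index $(d+1)/2$ and appending the extra facets $\gamma(m)=(\alpha')^{-1}(m)\cup\{2\}\setminus\{1\}$, which do not correspond to monomials under $\alpha'$), and your sketch's uniform definition of $\bar h$ with middle entry $\max\{h_{(d-1)/2},h_{(d-3)/2}\}$ does not address how the shelling is to be arranged in that case. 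As it stands the proposal is a plausible plan that mirrors the paper's strategy but omits its proof.
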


Theorem \ref{T:const} does not obtain all possible $h$-vectors of balls.
In \cite[Theorem 2]{LeeSchmidt}, Lee and Schmidt show that $h$-vectors with $h_1 \geq h_2 \geq \ldots \geq h_{d-1} \geq h_d = 0$ are the $h$-vectors of triangulated balls.
In dimensions five and higher, taking $h_i > h_{i-1}$ for $2 \leq i \leq (d/2-1)$ violates the conditions of Theorem \ref{T:const} (or the corresponding theorem for $(d-1)$ odd stated below).
Additionally, the construction of Billera and Lee in \cite[Section 6]{BilleraLee1} can be used to create balls with $h$-vector equal to any $M$-vector whose second half is all zeros, many of which can not be obtained using our construction.

In the proof of Theorem \ref{T:const} we divide a sphere into two complementary balls intersecting only along their common boundary.
The following lemma describes the relationship between the $h$-vectors of the two balls and the original sphere in this situation.

\begin{lemma}{\label{L:const}}
Let $\Delta$ be a $(d-1)$-dimensional triangulated sphere and $B \subset \Delta$ be a $(d-1)$-dimensional triangulated ball.
Let $C := (\Delta \backslash B) \cup (\partial B)$ be the complementary $(d-1)$-ball to $B$ in $\Delta$.
Then $h_i (C) = h_i (\Delta) - h_{d-i} (B)$.
\end{lemma}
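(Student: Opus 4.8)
The plan is to compute everything at the level of $f$-vectors and then translate to $h$-vectors using the defining generating-function identity. Since $\Delta = B \cup C$ and $B \cap C = \partial B$ (as subcomplexes), inclusion–exclusion on faces gives $f_i(\Delta) = f_i(B) + f_i(C) - f_i(\partial B)$ for every $i$, equivalently $f_i(C) = f_i(\Delta) - f_i(B) + f_i(\partial B)$. First I would substitute this into the polynomial identity $\sum_i h_i(C)x^i = \sum_i f_{i-1}(C)x^i(1-x)^{d-i}$ and split it into three sums, one for each term on the right-hand side. The sum involving $f_{i-1}(\Delta)$ is exactly $\sum_i h_i(\Delta)x^i$, and the sum involving $f_{i-1}(B)$ is $\sum_i h_i(B)x^i$, so the whole thing reduces to understanding the contribution of $\sum_i f_{i-1}(\partial B)x^i(1-x)^{d-i}$.

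The key step is therefore to relate the generating function built from $f(\partial B)$ with the weights appropriate to a $(d-1)$-dimensional complex to the actual $h$-vector of $\partial B$, which is a $(d-2)$-dimensional sphere and so carries weights $(1-x)^{(d-1)-i}$ instead. Writing $\partial B$'s own $h$-polynomial as $h_{\partial B}(x) = \sum_{i=0}^{d-1} f_{i-1}(\partial B) x^i (1-x)^{(d-1)-i}$, we get $\sum_{i} f_{i-1}(\partial B) x^i (1-x)^{d-i} = (1-x)\, h_{\partial B}(x)$. So the contribution of the boundary term to $\sum_i h_i(C) x^i$ is $(1-x) h_{\partial B}(x) = \sum_i h_i(\partial B) x^i - \sum_i h_i(\partial B) x^{i+1} = \sum_i \big(h_i(\partial B) - h_{i-1}(\partial B)\big) x^i = \sum_i g_i(\partial B)\, x^i$. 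Hence, comparing coefficients of $x^i$, we obtain $h_i(C) = h_i(\Delta) - h_i(B) + g_i(\partial B)$.

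To finish, I would invoke the relation between a ball and its boundary recorded earlier in the excerpt: for a homology $(d-1)$-ball $B$, Theorem \ref{GDS} (in its reduced form) gives $g_i(\partial B) = h_i(B) - h_{d-i}(B)$. Substituting this into the previous display yields $h_i(C) = h_i(\Delta) - h_i(B) + h_i(B) - h_{d-i}(B) = h_i(\Delta) - h_{d-i}(B)$, which is the claim. (One should note that since $\Delta$ is a sphere and $B,C$ are honest triangulated balls, all the Dehn–Sommerville-type input applies; the only mild care needed is the bookkeeping at the ends of the range, where $h_{-1}(\partial B) = 0$ and the degree-$d$ coefficient of $(1-x)h_{\partial B}(x)$ vanishes because $\partial B$ has $h_d$-entry zero in its own indexing, consistent with $h_d(\Delta) = h_0(B) = 1$ on the other side.)

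I expect the only real obstacle to be purely organizational: keeping the two different "ambient dimensions" straight, i.e.\ that $\Delta, B, C$ are weighted as $(d-1)$-dimensional complexes while $\partial B$ is genuinely $(d-2)$-dimensional, so that the factor $(1-x)$ appears exactly once. Everything else is a routine manipulation of the $f$-to-$h$ transform plus one appeal to the generalized Dehn–Sommerville relations already available in the paper.
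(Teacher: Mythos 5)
Your proposal is correct and follows essentially the same route as the paper: inclusion--exclusion on $f$-vectors for $\Delta = B \cup C$, $B \cap C = \partial B$, conversion to the $h$-level identity $h_i(C) = h_i(\Delta) - h_i(B) + g_i(\partial B)$ (which the paper calls a ``straightforward calculation'' and you carry out explicitly via the generating-function transform), and then the reduced form of Theorem \ref{GDS}, $g_i(\partial B) = h_i(B) - h_{d-i}(B)$, to conclude. One small slip in your end-of-range parenthetical: the degree-$d$ coefficient of $(1-x)h_{\partial B}(x)$ is $-h_{d-1}(\partial B) = -1$, not zero, and it is precisely this $-1$ that makes the $i = d$ case read $h_d(C) = 1 - 0 - 1 = 0 = h_d(\Delta) - h_0(B)$, so the main argument is unaffected.
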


\begin{proof}
Since $B$ and $C$ intersect only along $\partial B$ and $B \cup C = \Delta$ 
\begin{equation*}
f_i(B) + f_i (C) = f_i (\Delta) + f_i (\partial B)
\end{equation*}
A straightforward calculation shows
\begin{equation}{\label{E:SplitSphere}}
h_i(B) + h_i (C) = h_i (\Delta) + (h_i (\partial B) - h_{i-1} (\partial B) ) = h_i (\Delta) + g_i (\partial B)
\end{equation}
where we take $g_0 (\partial B) = h_0 (\partial B)= 1$.
From Theorem \ref{GDS} $ g_{i} (\partial B)=  h_i (B) - h_{d-i} (B)$.
Substituting this into equation (\ref{E:SplitSphere}) and simplifying yields $h_i (C) = h_i (\Delta) - h_{d-i} (B)$, as desired.
\end{proof}

Since the proof of Theorem \ref{T:const} relies heavily on the ideas in the Billera-Lee construction \cite[Section 6]{BilleraLee1} and Kalai's paper \cite{Kalai88} we review some of those concepts and notation here.

Let $d>0$ be an odd integer.
Define $F_d(n)$, a collection of $(d+1)$ subsets of $[n]$, by
$F \in F_d (n)$ if and only if $F = \cup_{j=1}^{(d+1)/2}\{ i_j, i_j+1\}$ where $i_1, \ldots, i_{(d+1)/2}$ are elements of $[n-1]$ such that $i_{j+1} > i_j +1$ for every $j$.
Let $I$ be any initial segment in $F_d(n)$ with respect to the partial order $<_p$.
Kalai showed that $B(I)$, the simplicial complex with facets the elements of $I$, is a shellable ball.
A shelling order is given by any linear order of the facets consistent with $<_p$ and the size of the restriction face of a facet $F$ is given by the number of pairs of vertices of $F$ not in their leftmost possible position, $|r(F)| = |\{ j : i_j \neq 2j-1\}|$.

Let $\F_{b,c}$ be the set of all monomials in the variables $Y_1, Y_2, \ldots ,Y_{b}$ of degree at most $c$.
Define a bijection $\alpha : F_d (n) \rightarrow \F_{n-d-1,(d+1)/2}$ by $\alpha(F) = \prod_{j=1}^{(d+1)/2}  Y_{e_j}$, where $e_j = i_j-2j+1$ is the amount that the $j$th pair of $F$ is displaced from its leftmost possible position and we take $Y_0 = 1$.
This bijection is order preserving between the partial orders $<_p$ on monomials and subsets of $[n]$.
Therefore, given an initial segment of monomials in $\F_{b,c}$ using the partial order $<_p$, the corresponding facets (under the map $\alpha^{-1}$) form a shellable ball with $|r(F)|$ equal to the degree of $\alpha(F)$ for all facets $F$.
In particular, a rev-lex initial segment of monomials gives rise to a shellable ball.
So given an $M$-vector, the image under $\alpha^{-1}$ of the corresponding compressed order ideal of monomials is a shellable ball with $h$-vector equal to the original $M$-vector (this is what was done in the Billera and Lee paper).

In the case where $d$ is even, the construction is altered by adding an additional vertex $\{0\}$ to each facet.

\begin{proof}[Proof of Theorem \ref{T:const}]
Define 
$$(1,g_1,g_2, \ldots ,g_{(d-3)/2}, g_{(d-1)/2}) := (1,h_1-1,h_2-h_1,\ldots,h_{(d-3)/2} - h_{(d-5)/2},  \max \{h_{(d-1)/2} - h_{(d-3)/2},0\} ).$$

Begin with the case $h_{(d-1)/2} - h_{(d-3)/2} \geq 0$. 
Since $(1,g_1,g_2, \ldots , g_{(d-1)/2},0,0,\ldots)$ is an $M$-vector, there is a compressed order ideal $I$ with this vector as its degree sequence.
We know $I \subseteq \F_{g_1,(d-1)/2} \subseteq \F_{g_1,(d+1)/2}$, so using the Billera-Lee method construct the $d$-ball $B(I)$ with $h$-vector $(1,g_1,g_2, \ldots , g_{(d-1)/2},0,0,\ldots)$.
Note that each facet of $B(I)$ contains the vertices $\{1\},\{2\}$.
The boundary of $B(I)$ is a $(d-1)$-sphere.
Theorem \ref{GDS}, the definition of the $g_i$, and the Dehn-Sommerville equations combine to give 
$$h(\partial B(I)) = (1,h_1,h_2,\ldots, h_{(d-1)/2},h_{(d-1)/2}, \ldots ,h_2,h_1,1)$$
Define 
\begin{equation*}
(1,G_1, \ldots, G_{(d-1)/2}) := 
(1,h_1-h_{d-1}, \ldots,  h_{(d-1)/2} - h_{(d+1)/2} )
\end{equation*}
We now construct a $(d-1)$-ball $\B$ in the sphere $\partial B(I)$ with $h (\B) = (1,G_1,G_2,\ldots, G_{(d-1)/2)})$.
By Lemma \ref{L:const}, the complementary ball $\partial B(I) \backslash (\B \backslash \partial \B)$ will be the desired $(d-1)$-ball.

The $(d-1)$-ball $\B$ uses the same correspondence $\alpha$ between facets and monomials except that the vertex names are shifted by one.
Given a monomial $m = \cup_{j=1}^{(d-1)/2} Y_{e_j}$ we define the potential corresponding facet of $\B$ by 
$(\alpha')^{-1} (m) := \{1\} \cup \left( \cup_{j=1}^{(d-1)/2} \{i_j, i_j+1\} \right)$ where $i_j = e_j+2j$ (instead of $i_j = e_j+2j-1$, as is the case for the correspondence $\alpha^{-1}$).

Next we characterize the facets of $\partial B(I)$ that will be used in the construction $\B$.
A set of $d$ vertices is a facet of $\partial B(I)$ if and only if it is in exactly one facet of $B(I)$.
Note that the only possible facet of $B(I)$ that can contain the face $(\alpha')^{-1} ( \prod_{j=1}^{(d-1)/2} Y_{e_j})$ is $\alpha^{-1}( Y_0 \cdot  \prod_{j=1}^{(d-1)/2} Y_{e_j'})$ where $e_j' = \max \{e_j-1,0\}$.
It follows that $(\alpha')^{-1} ( \prod_{j=1}^{(d-1)/2} Y_{e_j})$ is a facet of $\partial B(I)$ if and only if $\prod_{j=1}^{(d-1)/2} Y_{e_j'} \in I$.
Additionally, since all of the facets of $B(I)$ contain the vertex $\{1\}$, the face $F \backslash \{1\}$ is in $\partial B(I)$ for all facets $F$ of $B(I)$.

We now inductively build $\B$.
Let $\M_k$ be the set of degree $k$ monomials $m$ such that $(\alpha')^{-1}(m)$ is in $\B$.
By the previous paragraph, since $1 \in I$ we know that $(\alpha')^{-1}(1)$ is a facet of $\partial B(I)$.
We therefore set $\M_0 = \{1\}$.

Assume that for some $k > 0$ we have already chosen $\M_i$ for $i \leq k$ with $|\M_i| = G_i$.
Define the set $S_{k+1}$ to be $\{ Y_1 \cdot m : m \in \M_k\}$ (call these type one elements) as well as all of the monomials $\prod_{i=1}^{k+1} Y_{e_i}$ such that all of the $e_i > 1$ and $\prod_{i=1}^{k+1} Y_{e_i-1} \in I$ (these are called type two elements).
There are $G_k$ elements of the first type and $g_{k+1}$ elements of the second type giving a total of 
$$G_k + g_{k+1} = (h_k - h_{d-k}) + (h_{k+1} -h_k) = h_{k+1} - h_{d-k} \geq h_{k+1} - h_{d-(k+1)} = G_{k+1}$$
elements in $S_{k+1}$.

Select the first $G_{k+1}$ elements of $S_{k+1}$ in the rev-lex order to be the monomials in $\M_{k+1}$.
We must show that this set of monomials corresponds to an initial segment in the partial order $<_p$ and hence gives a (shellable) ball with the desired $h$-vector.
We show this inductively; given that the monomials in $\cup_{i=0}^{k} \M_i$ are an initial segment in $<_p$ we show that the monomials in $\cup_{i=0}^{k+1} \M_i$ still form an initial segment.

CLAIM 1:  Let $\tau$ be a type two element in $\M_k$.
Then $\{m \in \M_k  :  m \leq_{rl} \tau \}$ is a rev-lex initial segment in degree $k$.

Let $m$ be a degree $k$ monomial such that $m <_{rl} \tau$.
Let $m'$ be the monomial $m$ with all of the $Y_1$'s changed to $Y_2$'s (if $m$ does not contain $Y_1$ then $m'=m$).
Both $m$ and $\tau$ are degree $k$ monomials and $\tau$ contains no $Y_1$'s which implies $m' \leq_{rl} \tau$.
Since $I$ is a compressed order ideal the type two elements of $S_k$ form a compressed order ideal in the variables $Y_2, Y_3, \ldots$.
Therefore $m'$ must be a type two element of $S_k$ and $m' \in \M_k$.
However, $m <_p m'$  which by the inductive assumption implies $m \in \M_k$, proving the desired claim.

CLAIM 2:
Let $i \leq k$.  If there exists a type two element $\tau \in S_i$ such that $\tau \notin \M_i$ then $\M_i$ is a rev-lex initial segment in degree $i$.

Our proof is by induction on $i$.
The result is trivial in the base case $i=1$ where all initial segments in the order $<_p$ are also rev-lex initial segments.

Assume the claim holds for $i = l-1 \geq 1$.
Let $M$ be the rev-lex largest element of $\M_l$.
If $M$ is a type two element then we are done by the previous claim, so assume that $M$ is a type one element.
Let $m$ be a degree $l$ monomial such that $m <_{rl} M$.
We must show that $m \in \M_l$.

If $m$ does not contain the variable $Y_1$, 
then the fact that $m <_{rl} \tau$ means that $m$ is a type two element of $S_l$.
Then the fact that $m <_{rl} M$ forces $m \in \M_l$.

If $m$ contains $Y_1$ then $m/Y_1 <_{rl} M/Y_1 \leq_{rl} \tau/Y_j$ where $Y_j$ is the smallest (positive) index such that $Y_j$ is in $\tau$.
We also know that $M/Y_1 \in \M_{l-1}$ and $\tau/Y_j$ is a type two element in $S_{l-1}$.
If $\tau/Y_j \in \M_{l-1}$ then by claim one $m/Y_1 \in \M_{l-1}$.
This means $m$ is a type one element of $S_l$ which forces $m \in \M_l$.
If $\tau/Y_j \not\in \M_{l-1}$ then by the inductive hypothesis and the fact that $M/Y_1 \in \M_{l-1}$ we have $m/Y_1 \in \M_{l-1}$ which forces $m \in \M_l$, completing the proof of the claim.

CLAIM 3:  Let $\rho$ be a type two element in $\M_{k+1}$.
Then $\{m \in \M_{k+1}  :  m \leq_{rl} \rho \}$ is a rev-lex initial segment in degree $k+1$.

Our proof of claim three will be in two cases.
First consider the case where there exists a type two element $\tau \in S_k$ such that $\tau \notin \M_k$.
By claim two $\M_k$ is a rev-lex initial segment in degree $k$.
Let $N$ be the rev-lex smallest degree $k+1$ monomial not in $\M_{k+1}$.
It is sufficient to show $\rho <_{rl} N$.

If $N$ is not one of the first $G_{k+1}$ monomials of degree $k+1$ in the rev-lex order then all of the elements of $\M_{k+1}$ are rev-lex less than $N$, proving the desired claim.
We therefore assume that $N$ is one of the first $G_{k+1}$ monomials of degree $k+1$ in the rev-lex order.
If $N$ contains the variable $Y_1$ then since $\M_k$ is a rev-lex initial segment and the $G_i$ form an $M$-vector $N/Y_1 \in \M_k$.
This means $N$ is a type one element in $S_{k+1}$ and therefore $N \in \M_{k+1}$, contradicting our definition of $N$.
Thus $N$ does not contain the variable $Y_1$.
Since the type two elements of $\M_{k+1}$ are a rev-lex initial segment in $Y_2, Y_3, ...$  all of the type two elements of $\M_{k+1}$ are rev-lex less than $N$, proving the desired claim for the first case.

Now consider the case where all of the type two elements of $S_k$ are in $\M_k$.
Let $n$ be a degree $k+1$ monomial with $n <_{rl} \rho$.
We need to show that $n \in \M_{k+1}$.
If $n$ does not contain $Y_1$ then the result follows from the initial segment property of the type two elements of $S_{k+1}$.
If $n$ contains $Y_1$, then consider $n/Y_1 \leq_{rl} \rho/Y_j$ where $Y_j$ is the smallest variable in $\rho$.
Since the $g_k$ form an $M$-vector and all of the type two elements of degree $k$ are in $\M_k$ we know $\rho/Y_j$ is a type two element in $\M_k$.
Claim one then shows that $n/Y_1$ is in $\M_k$ which forces $n \in S_{k+1}$ and $n \in \M_{k+1}$.
This completes the proof of claim three.

CLAIM 4:  $\M_{k+1}$ is an initial segment of degree $k+1$ monomials in the partial order $<_p$.

To prove claim four we take any monomial $m \in \M_{k+1}$ and any degree $k+1$ monomial $m'$ with $m' <_p m$ and show that $m' \in \M_{k+1}$.
From claim three $\M_{k+1}$ consists of a rev-lex initial segment of degree $k+1$ monomials along with a (possibly empty) collection of additional type one monomials added in rev-lex order.
Since any rev-lex initial segment is also an initial segment in the partial order, we need only consider the case where $m$ is one of the type one monomials that is not part of the rev-lex initial segment in $\M_{k+1}$.
Because $m$ is a type one element $m$ contains the variable $Y_1$.
The fact that $m' <_p m$ forces $m'$ to contain $Y_1$.
Then $m'/Y_1 <_p m/Y_1$ and $m/Y_1 \in \M_k$ which combined with the inductive hypothesis implies $m'/Y_1 \in \M_k$.
This means $m'$ is a type one element of $S_{k+1}$.
Since $m' <_{rl} m$ we know $m' \in \M_{k+1}$, proving claim four.

We now show that all of the monomials in $\cup_{i=0}^{k+1} \M_i$ form an initial segment in the order $<_p$.
Given the inductive hypothesis and claim four, all we have left to show is that if $m \in \M_{k+1}$ and $m'$ is a monomial of degree less than or equal to $k$ with $m' <_p m$ then $m' \in \cup_{i=0}^{k} \M_i$.
Let $Y_j$ be the smallest variable in $m$, so that $m' <_p m/Y_j$.
By the inductive hypothesis it is sufficient to show $m/Y_j  \in \M_{k}$.

For type one elements $Y_j = Y_1$ and the claim follows from the definition of a type one element.
For type two elements we consider two cases.
For the case where there exists a type two element $\tau \in S_k$ such that $\tau \notin \M_k$ the result follows from claims two and three along with the fact that the $G_i$ form an $M$-vector.
In the case where all of the type two elements of $S_k$ are in $\M_k$ the result was already shown in the proof of claim 3.

We now address the case where $h_{(d-1)/2} - h_{(d-3)/2} < 0$.  In this case $ g_{(d-1)/2} =0$ which implies $h(\partial B(I)) = (1,h_1,h_2,\ldots, h_{(d-3)/2},h_{(d-3)/2}, h_{(d-3)/2},h_{(d-3)/2}, \ldots ,h_2,h_1,1)$.
Therefore, we alter our definition of the $G_i$.
\begin{align*}
(1,G_1, \ldots, &G_{(d-3)/2}, G_{(d-1)/2}, G_{(d+1)/2}) := \\
&(1,h_1-h_{d-1}, \ldots,  h_{(d-3)/2} - h_{(d+3)/2}, h_{(d-3)/2} - h_{(d+1)/2}, h_{(d-3)/2} - h_{(d-1)/2} )
\end{align*}
We use the same argument as above to construct the $\M_{k}$ with $k < (d-1)/2$.
Note that in this case the decreasing assumption on the $h_i$'s implies that $G_{(d-3)/2} \geq G_{(d-1)/2} \geq G_{(d+1)/2}$.
This allows us to choose $\M_{(d-1)/2}$ to be the first $G_{(d-1)/2}$ type one elements of degree $(d-1)/2$ in the rev-lex order.
Then $\cup_{k=0}^{(d-1)/2} \M_k$ is an initial segment in $<_p$, so the corresponding facets form a shellable ball with $h$-vector $(1,G_1, \ldots, G_{(d-3)/2}, G_{(d-1)/2},0,0, \ldots)$.

Let $E$ be the first $G_{(d+1)/2}$ monomials in $\M_{(d-1)/2}$ using the rev-lex order and let $m$ be a monomial such that $(\alpha')^{-1}(m) \in E$.
The degree of $m$ is $(d-1)/2$ meaning that all pairs of vertices in $m$ are shifted to the right and $\{2\} \notin m$.
However, $\{2\}$ is in every facet of $B(I)$ and since $m \in  \M_{(d-1)/2}$ we know $(\alpha')^{-1}(m)$ is contained in exactly one facet of $B(I)$.
Therefore $(\alpha')^{-1}(m) \cup \{2\}$ must be a facet of $B(I)$.
As argued above this implies that $\gamma (m) := (\alpha')^{-1}(m) \cup \{2\} \backslash \{1\}$ is in $\partial B(I)$ for all $m \in E$.

In order to get the desired $(d+1)/2$ entry of our $h$-vector, for each $m \in E$ we add the facet $\gamma(m)$ to our complementary ball.
Though the facets in this last step do not correspond to monomials using the map $\alpha'$, because of the relationship between $(\alpha')^{-1}(m)$ and $\gamma (m)$ it is straightforward to check that adding the $\gamma (m)$ to the end of the shelling in the same order as the $(\alpha')^{-1}(m)$ still gives a shellable ball with the correct $h$-vector.

\end{proof}

For the case where $(d-1)$ is odd we slightly alter the argument.
We now construct a ball with $h$-vector $(1,h_1, h_2, \ldots , h_{d-1}, 0)$ that satisfies the following conditions:

$\bullet$
$(1,h_1-1,h_2-h_1,\ldots, h_{d/2-1} - h_{d/2-2}, \max \{ h_{d/2}-h_{d/2 - 1}, 0 \})$ is an M-vector.

$\bullet$
$(1,h_1-h_{d-1},h_2-h_{d-2}, \ldots , h_{d/2-1} - h_{d/2+1})$ is an M-vector.

$\bullet$
$h_{d/2}\geq h_{d/2+1} \geq \cdots \geq h_{d-1}$.

With appropriate changes in notation and parity, the same argument as above can be used to prove this result.

It is also possible to shell the complementary ball constructed in Theorem \ref{T:const}.
The highly technical proof of this fact is included in appendix.
This proof is independent of the work in proof of Theorem \ref{T:const}.

\section{Consequences of the Construction}{\label{S:Sum}}

As noted in the previous section, the conditions of Theorem \ref{T:const} are not in general necessary for the existence of a ball with a given $h$-vector.
However, in dimensions three and four it is straightforward to check that the conditions of Conjecture \ref{gconditions} imply the conditions of Theorem \ref{T:const}.
Since we have already shown the necessity of the conditions of Conjecture \ref{gconditions} in dimensions three and four, these conditions give a complete characterization of the $h$-vectors of three and four dimensional balls.
As mentioned previously, this result was first obtained by Lee and Schmidt in \cite{LeeSchmidt}.

Starting in dimension five we know that the conditions of Conjecture \ref{gconditions} are no longer sufficient and we also know that the conditions of Theorem \ref{T:const} are no longer necessary.
In particular, our construction can not create any five-balls with $h_2 < h_1$, even though many such balls exist.
Given any five-ball we can attach to it a single five-simplex by gluing along a single co-dimension one face of the boundary of each ball.
This process adds one new vertex and one new facet to the original ball.
As described in section \ref{S:glue}, this increases the $h_1$ value of the original ball by one without changing any of the other entries of the $h$-vector.
Repeating this process we can create many different balls with $h_2 < h_1$.

While there exist balls with $h_2 < h_1$ that do not arise from adding vertices to other balls as described above, we have so far been unable to find any five-ball whose $h$-vector can not be realized by adding vertices to a ball constructed using Theorem \ref{T:const}.
In fact, using the methods of section \ref{S:nec} it can be shown that many of the `small' $h$-vectors that cannot be obtained by adding vertices to balls constructed using Theorem \ref{T:const} cannot be the $h$-vectors of five-balls.
We therefore make the following conjecture.

\begin{conj}{\label{C:5balls}}
A vector $h = (1,h_1,h_2, h_3,h_4,h_5,0)$ is the $h$-vector of a five-ball if any only if there exists some integer $m > 0$ such that  $h = (1,h_1-m,h_2, h_3,h_4,h_5,0)$ satisfies the conditions of Theorem \ref{T:const}.
\end{conj}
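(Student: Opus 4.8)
\medskip
\noindent\textbf{Proof proposal.}
The ``if'' direction is the easy half: it combines Theorem~\ref{T:const} with repeated stacking, as already sketched informally in Section~\ref{S:Sum}. Suppose some $m>0$ makes the vector $(1,h_1-m,h_2,h_3,h_4,h_5,0)$ satisfy the hypotheses of Theorem~\ref{T:const} (the $d=6$, ``$(d-1)$ odd'' version). That theorem produces a triangulated five-ball $B_0$ with this $h$-vector. Now perform the move of Section~\ref{S:Sum} exactly $m$ times: at each step glue a new five-simplex, on a fresh apex vertex, onto the current ball along a single facet of its boundary four-sphere. Such a facet always exists, the gluing introduces no identifications, and by the special case of equation~(\ref{E:combine}) it raises $h_1$ by one while leaving every other $h_i$ unchanged. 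After $m$ steps we obtain a triangulated five-ball with $h$-vector $(1,h_1,h_2,h_3,h_4,h_5,0)$.

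\medskip
The ``only if'' direction carries essentially all of the difficulty, and I would organize it in two stages. \emph{Stage one} is to assemble every necessary condition on the $h$-vector $h=(1,h_1,h_2,h_3,h_4,h_5,0)$ of a five-ball $\Delta$ that we can actually prove: that $h$ is an $M$-vector (Cohen--Macaulayness); that $h_1\ge h_5$ and $h_2\ge h_4$ (Barnette's theorem applied to $\partial\Delta$ via the relation $g_i(\partial\Delta)=h_i(\Delta)-h_{d-i}(\Delta)$ of Theorem~\ref{GDS}); that $h_4\ge h_5$ and $(1,h_1,h_2,h_3,h_4-h_5)$ is an $M$-vector (the $k=d-3$ case of Conjecture~\ref{gconditions}, via Swartz's inequality); and, crucially, the extra obstructions coming from the splitting arguments of Section~\ref{S:nec} --- the Peeva/Eliahou--Kervaire criterion forcing a co-dimension-one splitting whenever $\beta_{n,n+1}(R/(L))-\beta_{n-1,n+1}(R/(L))>0$, and the absent-triangle count from the clique-complex analysis of Section~\ref{S:Comb}, each combined with the $h$-vector arithmetic~(\ref{E:combine}) of splitting. \emph{Stage two} is then purely combinatorial: given those conditions, choose $m$ in the window $\max\{1,h_1-h_2\}\le m\le h_1-\max\{1,h_5\}$ --- showing this window is nonempty being itself part of what stage one must deliver --- and verify that $(1,h_1-m,h_2,h_3,h_4,h_5,0)$ satisfies the three bullets of the $d=6$ case of Theorem~\ref{T:const}: that $(1,h_1-m-1,h_2-h_1+m,\max\{h_3-h_2,0\})$ and $(1,h_1-m-h_5,h_2-h_4)$ are $M$-vectors, and that $h_3\ge h_4\ge h_5$. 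Two pinch points I foresee here are that lowering the leading entry simultaneously enlarges, and tightens the Macaulay bound on, the second entry of the first $M$-vector --- so the window may be narrow --- and that the last bullet demands the inequality $h_3\ge h_4$, which is \emph{not} among the conditions established in the excerpt; both must be shown to follow from stage one.

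\medskip
The main obstacle --- and the reason this is only a conjecture --- is precisely stage one: producing a list of necessary conditions that is simultaneously provable and tight enough to be matched, after stacking, by the construction of Theorem~\ref{T:const}. Section~\ref{S:nec} already shows that the ``expected'' conditions of Conjecture~\ref{gconditions} are strictly too weak in dimension five, and the infinite family parametrized by $(x,y)$ there shows that the genuine constraints couple $h_2$, $h_3$ and the tail in a subtle way; so what is really being conjectured is an \emph{if-and-only-if} description of five-ball $h$-vectors modulo stacking, and the missing piece is some new necessary condition. I expect it to take one of two shapes: a sharp lower bound --- refining the graph/clique-complex estimate of Section~\ref{S:Comb} --- on how small $h_1$ can be once all stacked vertices are stripped off, or a strengthening of the $\beta_{n,n+1}$ splitting test obtained by iterating the split and controlling the $h$-vectors of \emph{all} resulting pieces at once. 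As a concrete first step I would settle the conjecture by computer for all $h$ with $h_1+h_2$ below a small bound: this both tests the statement and is the most likely way to read off the exact form of the condition that stage one is still missing.
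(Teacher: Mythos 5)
This statement is Conjecture~\ref{C:5balls}, which the paper itself does not prove: it offers only the ``if'' direction implicitly (build a ball via Theorem~\ref{T:const}, then raise $h_1$ by repeatedly gluing a five-simplex along a boundary facet, using the special case of equation~(\ref{E:combine})) together with computational evidence, and your proposal handles that direction in exactly the same way while correctly identifying the ``only if'' direction as the open content of the conjecture. So there is nothing to fault relative to the paper — your stage-one/stage-two outline (including the observation that the $h_3\ge h_4$ requirement of Theorem~\ref{T:const} is not among the known necessary conditions) is a reasonable research plan, not a gap, since no proof exists to compare it against.
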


If any two balls satisfying the condition in Conjecture \ref{C:5balls} are joined along a single co-dimension one face, the $h$-vector of the resulting ball still satisfies the conditions of the conjecture.
However, it is not true that the conditions of Theorem \ref{T:const} give all of the $h$-vectors of balls that cannot be split along some co-dimension one face (i.e. all the balls $\Delta$ with $\beta_{n-d,n-d+1} (k(\Delta)) = 0$).
As an example, combining the ball with $h$-vector (1,3,6,10,5,3,0) formed from the construction of Theorem \ref{T:const} with the shellable ball with $h$-vector $(1,2,0,0,0,0,0)$ by gluing along two boundary faces gives a ball with $h$-vector (1,5,7,10,5,3,0) but no co-dimension one face along which to split.

Beyond dimension five we know that conjecture \ref{gconditions} does not a give a description of the $h$-vectors of balls but we do not have any conjecture to replace it.
Determining even just a conjectural description of the $h$-vectors of these higher dimensional balls remains an interesting open problem.

\appendix
\section{Shellable Ball Construction}

In this appendix we strengthen the result of Theorem \ref{T:const} by showing that there exists a \emph{shellable} ball with the desired $h$-vector.  We begin with the case where $d-1$ is even, the other case is handled similarly.
Define 
$$(1,g_1,g_2, \ldots ,g_{(d-3)/2}, g_{(d-1)/2}) := (1,h_1-1,h_2-h_1,\ldots,h_{(d-3)/2} - h_{(d-5)/2},  \max \{h_{(d-1)/2} - h_{(d-3)/2},0\} ).$$
Let $J$ be the compressed order ideal of monomials with degree sequence $(1,g_1,g_2, \ldots , g_{(d-1)/2},0,0,\ldots)$.
Following the Billera-Lee construction, build the $d$-ball $B(J)$ with $h$-vector $(1,g_1,g_2, \ldots , g_{(d-1)/2},0,0,\ldots)$.
We show that in the boundary sphere $\partial B(J)$ there is a shellable ball with the desired $h$-vector.

Begin with the case $h_{(d-1)/2} - h_{(d-3)/2} \geq 0$. 
Define 
\begin{equation*}
(1,G_1, \ldots, G_{(d-1)/2}) := 
(1,h_1-h_{d-1}, \ldots,  h_{(d-1)/2} - h_{(d+1)/2} )
\end{equation*}
Our first goal is to construct a shellable $(d-1)$-ball $\C(J)$ in $\partial B(J)$ with 
$$h(\C(J)) =( 1,h_1, h_2, \ldots , h_{(d-1)/2},0 ,0 , \ldots)$$

Let $I$ be a rev-lex initial segment of the monomials of $J$.
Note that $I$ is a compressed order ideal.
Let $(c_0=1 ,c_1, \ldots , c_{(d-1)/2})$ be the degree sequence of $I$.
The $d$-ball $B(I)$ has $h$-vector $(1,c_1, \ldots , c_{(d-1)/2},0,0, \cdots , 0)$.
Using induction on the number of monomials in $I$ we construct a shellable $(d-1)$-ball $\C(I)$ in $\partial B(I)$ with $h$-vector
$$h(\C(I)) = (1, 1 + c_1, 1+ c_1 + c_2, \cdots , \sum_{i=0}^{(d-1)/2} c_i )$$
The case $I=J$ constructs the desired ball $\C(J)$.

We begin by describing the facets of $\C(I)$.
Let $m$ be any monomial in $I$.
Write $\alpha^{-1}(m)$ as 
$$\alpha^{-1}(m) =  \cup_{j=1}^{(d+1)/2} \{i_j,i_{j+1}\}$$
For $k \in \{1,2, \ldots , (d+1)/2 \}$ and $m \in J$, the sets in $ \partial B(I) \cap \alpha^{-1}(m)$ that have the form 
\begin{equation}{\label{facetform}}
\left( \cup_{1 \leq j \leq (d+1)/2, j \neq k} \{i_j,i_{j+1}\} \right) \cup \{ i_{k+1} \}
\end{equation}
will be facets of $\C(I)$.
Note that for any face $F$ described in (\ref{facetform}) there are exactly two monomials $m,m'$ such that $\alpha^{-1} (m)$ and $\alpha^{-1} (m')$ contain $F$.
Let $m <_{rl} m'$.
The face $F$ is contained in $\C(I)$ for exactly those ideals $I$ that contain $m$ but not $m'$.

Next we describe the restriction face for each facet of $\C(I)$.
We can write any facet $F$ of $\C(I)$ as $F = \cup_{j=1}^{d} \{ p_j \}$ where $0 \leq p_1 < p_2 < \cdots < p_d$.
Let $l(F)$ be the size of the left endset of $F$, $l(F) := |\{ k : p_k = k\}|$.
The restriction face $r(F)$ in our shelling will be $\{ p_k : k = l(F) + 2n, n \in \N\}$.
Equivalently, starting after the left endset, every second vertex of $F$ is in $r(F)$.

Finally we describe the order for the shelling of $\C(I)$.
To do this we write each facet $F$ of $\C(I)$ as a disjoint union
$$F = L(F) \cup \left (\cup_{i=1}^{c_F} K_i(F) \right)$$
Here $L(F)$ is the left endset of $F$, $L(F) := \{i\}_{i=1}^{l(F)}$.
Note that $L(F)$ may be empty.
Each set $K_i(F)$ is a non-empty contiguous set of vertices $\{a,a+1, \ldots , b-1, b\}$ such that $a-1$ and $b+1$ are not in $F$.
The $K_i(F)$ are chosen so that for $i<j$ any element of $K_i(F)$ is less than any element of $K_j(F)$.

Let $F$ and $G$ be distinct facets of $\C(I)$.

If $|L(F)| > |L(G)|$ then we place $F$ before $G$ in our shelling order.
This is equivalent to ordering the facets by the size of their restriction faces in increasing order.

If $|L(F)| = |L(G)|$, then let $a$ and $b$ be the smallest elements of $K_1(F)$ and $K_1(G)$ respectively.
If $a<b$ then we place $F$ before $G$ in the shelling.

If $a=b$, $|K_1(F)|$ is odd, and $|K_1(G)|$ is even then we place $F$ before $G$ in the shelling.

If both $|K_1(F)|$ and $|K_1(G)|$ are odd and $|K_1(F)| < |K_1(G)|$ then place $F$ before $G$ in the shelling.
If $|K_1(F)| = |K_1(G)|$ is odd then let $e$ be the largest vertex that is in either $F$ or $G$ but not both.
Place the facet that contains $e$ earlier in the shelling.

If both $|K_1(F)|$ and $|K_1(G)|$ are even and $|K_1(F)| < |K_1(G)|$ then place $G$ before $F$ in the shelling.
If $|K_1(F)| = |K_1(G)|$ is even then repeat the above steps on $K_2(F)$ and $K_2(G)$ rather than the $K_1$'s.
Continuing in this manner, since $F$ and $G$ are distinct there will be some $i$ such that $K_i(F) \neq K_i(G)$ which will result in some way to order $F$ and $G$.

Now we inductively show that the above facets, restriction faces, and ordering give a shelling order for $\C(I)$.
For the base case $I = \{ 1 \}$ we have $B(I) =  \alpha^{-1} (1) = \{ i \}_{i=1}^{d+1}$.
The above described ordering gives a shelling $\{F_i\}_{i=1}^{(d+1)/2}$ where 
\begin{equation*}
F_i = \cup_{1 \leq j \leq d+1, j \neq d-2i+2} \{j\}
\end{equation*}

It is straightforward to check that this is a shelling order with restriction faces $r(F_i)$ as defined above and the desired $h$-vector.

We now consider the step where we add a new monomial to our ideal and a corresponding new facet to our $d$-ball.
Let $I$ be the previous ideal and let $I' = I \cup \{m\}$ be our new ideal.
So  $\alpha^{-1}(m)$ is the new face we add to get the ball $B(I')$.
Let $s(m)$ be the degree of $m$.
We write $\alpha^{-1}(m)$ as a disjoint union
\begin{equation}{\label{alphainverse}}
\alpha^{-1}(m) = \left( \cup_{i=1}^{d+1-2 \cdot s(m)} \{i\} \right) \cup \left( \cup_{j=1}^{s(m)} \{i_j,i_j+1\} \right)
\end{equation}
where $i_{j+1} > i_j +1$ for every $j$.
Note that $s(m)$ is the number of shifted pairs of $\alpha^{-1}(m)$.

Consider the facets $F$ of $\C(I)$ and $C(I')$ that have $|L(F)| = d+1-2 \cdot s(m)$.
The faces
\begin{equation}{\label{removedfaces}}
\left( \cup_{i=1}^{d+1-2 \cdot s(m)} \{i\} \right) \cup \left( \cup_{1 \leq j \leq s(m), j \neq k} \{i_j,i_j+1\} \right) \cup \{i_k\}
\end{equation}
for  $k=1,2, \ldots , s(m)$ are in $\C(I)$, but the addition of $\alpha^{-1} (m)$ to $B(I)$ causes these faces to not be in $\partial B(I')$ and hence also to not be in $\C(I')$.
The faces with left endset of size $d+1-2 \cdot s(m)$ that are in $\C(I')$ but not $\C(I)$ are 
$$\left( \cup_{i=1}^{d+1-2 \cdot s(m)} \{i\} \right) \cup \left( \cup_{1 \leq j \leq s(m), j \neq k} \{i_j,i_j+1\} \right) \cup \{i_k+1\}$$
where $k=1,2, \ldots , s(m)$.

We show that the facets $F$ of $\C(I')$ with  $|L(F)| \geq d+1-2 \cdot s(m)$ ordered as described above give a shelling order with the claimed restriction faces. 

No facets $F$ with $|L(F)| > d+1-2 \cdot s(m)$ are added or removed by moving from $\C(I)$ to $C(I')$.
Therefore, we need only check the result for the facets of $\C(I')$ with left endset of size $d+1-2 \cdot s(m)$.

CLAIM 1:
Let $F$ be a facet in $\C(I') \backslash \C(I)$ with $|L(F)| = d+1-2 \cdot s(m)$.
Then $r(F)$ is the unique minimal new face of $F$ in the above order.

To show that $r(F)$ is the unique minimal new face we show that it is a new face and that for each vertex $v \in r(F)$ the face $F - \{v\}$ is in some previous facet of our shelling order.

Each restriction face $r(F)$ for $F \in \C(I') \backslash \C(I)$ is equal to $r(G)$ for some $G \in \C(I) \backslash \C(I')$.
Therefore, for $F \in \C(I') \backslash \C(I)$, $r(F)$ can not be contained in some face $H \in \C(I')$ with $|L(H)| > d+1-2 \cdot s(m)$.
For $H \in \C(I')$ with $|L(H)| = d+1-2 \cdot s(m)$, one can check exhaustively that if $H$ contains $r(F)$ then $H$ is after $F$ in our ordering.
This exhaustive check uses the fact that exactly one of the monomials $n$ such that $\alpha^{-1} (n)$ contains $H$ must be rev-lex less than $m$.
This completes the proof that $r(F)$ is a new face.

From the inductive assumption we know that $\C(I)$ is shellable with the above described shelling order and restriction faces.
For each vertex $v \in r(F)$, the face $F - \{v\}$ is either in one of the facets of $\C(I') \backslash \C(I)$ that is before $F$ in the shelling order or is equal to $G - \{w\}$ where $G$ is one of the facets in $\C(I) \backslash \C(I')$ and $w \in r(G)$.
These two cases are distinguished by whether the vertex $v$ comes after (or in) the one contiguous set in $F$ of odd length or before the odd contiguous set.

In the former case we immediately have that $F-\{v\}$ is in some previous facet of our shelling order.
In the latter case we know that there is some facet $G'$ in $\C(I)$ before $G$ in the shelling order such that $G - \{w\} \subseteq G'$.
It is straightforward to check that the structure of the facets of $C(I) \backslash \C(I')$ forces $G' \in C(I')$.
The fact the all of the facets of $\C(I') \backslash \C(I)$ come later in our ordering than the facets of $\C(I) \backslash \C(I')$ shows that $F-\{v\}$ is in some previous facet of our shelling order, finishing claim one.

CLAIM 2:
Let $F$ be a facet in $\C(I') \cap \C(I)$ with $|L(F)| = d+1-2 \cdot s(m)$.
Then $r(F)$ is the unique minimal new face of $F$ in the above order.

By the inductive hypothesis we know that $r(F)$ is a new face in the shelling order on $\C(I)$.
Note that all of the faces of $\C(I') \backslash \C(I)$ contain the face $Q := \cup_{j=1}^{s(m)} \{i_j+1\} $.
The face $Q$ is not contained in any face of $\C(I)$ because it is the restriction face of $\alpha^{-1} (m)$ in $B(I')$ and hence is not a face of $B(I)$.
Therefore $r(F) \notin \C(I') \backslash \C(I)$ and $r(F)$ is a new face in $\C(I')$.

To complete the proof of claim two we must show that for each vertex $v \in r(F)$ the face $F - \{v\}$ is in some facet of $\C(I')$ that is before $F$ in our ordering.
Let $G':= F - \{v\}$.
Choose the first facet $G$ in the ordering on $\C(I)$ such that $G' \subseteq G$.
By the inductive hypothesis we know that such a $G$ exists and that $G$ is before $F$ in the ordering on $\C(I)$.
If $G \in \C(I')$ we are done, so assume $G \in \C(I) \backslash \C(I')$.
Write $G$ in the form (\ref{removedfaces}) above.

If $G'$ is obtained from $G$ by removing $\{i_k\}$ then it is impossible to add any vertex $w$ to $G'$ such that $w \in r(G' \cup \{w\})$ since $w$ can not be an even numbered vertex after the first gap in $G' \cup \{w\}$.
Therefore, $G'$ can not be equal to $ F - \{v\}$, and this case does not occur.

If $G'$ is obtained from $G$ by removing $\{i_j+1\}$ for $j < k$ or $\{i_j\}$ for $j > k$, then $G'$ does not contain $r(G)$.
Therefore $G'$ is contained in some face of $\C(I)$ before $G$ in our ordering, contradicting our minimality assumption on $G$.

If $G'$ is obtained from $G$ by removing $\{i_j+1\}$ for $j > k$ then for every vertex $w$ we can show that $G' \cup \{w\}$ is not equal to $F$ for (at least) one of the following reasons:
\begin{enumerate}
\item  $G' \cup \{w\}$ is not of the form (\ref{facetform}).
\item  $G' \cup \{w\}$ is contained in $\alpha^{-1} (n)$ and $\alpha^{-1} (n')$ for two distinct monomials $n,n'$ both rev-lex less than $m$.
\item  $G' \cup \{w\}$ is in $\C(I') \backslash \C(I)$.
\end{enumerate}

If $G'$ is obtained from $G$ by removing $\{i_j\}$ for $j < k$ then for every vertex $w$ either $G' \cup \{w\}$ is not equal to $F$ for one of the above reasons or there is a facet $H \in \C(I') \backslash \C(I)$ such that $G' \subseteq H$ and $H$ is before $G' \cup \{w\}$ in our ordering.
This completes claim 2.

Now we consider the latter part of our shelling consisting of faces $F$ such that $|L(F)| < d+1-2 \cdot s(m)$.
For every face $F$ in $\C(I) \backslash \C(I')$, $|L(F)| = d+1-2 \cdot s(m)$.
Thus no faces are removed in the later part of the shelling.
For each even $k$ such that $0 \leq k < d+1-2 \cdot s(m)$ there is one face $F_k$ in $\C(I') \backslash \C(I)$ with $|L(F_k)| = k$.
Using the notation of (\ref{alphainverse}) for $\alpha^{-1}(m)$ this new face is given by
\begin{equation*}
F_k = \left( \cup_{i=1}^{d+1-2 \cdot s(m)} \{i\} \right) \backslash \{k+1\} \cup \left( \cup_{j=1}^{s(m)} \{i_j,i_j+1\} \right)
\end{equation*}

CLAIM 3:
Let $F_k$ be a facet in $\C(I') \backslash \C(I)$ with $|L(F_k)| < d+1-2 \cdot s(m)$.
Then $r(F_k)$ is the unique minimal new face of $F_k$ in the above order.

Note that 
$$r(F_k) = \left( \cup_{i=k/2}^{d/2- s(m)-3/2} \{2i+3\} \right) \cup \left( \cup_{j=1}^{s(m)} \{i_j\} \right)$$

Fix $k$ and let $G$ be a face of $\C(I')$ such that $r(F_k)  \subseteq G$.
If $|L(G)| > k$ then because $I$ is a compressed order ideal $G$ would already be in two facets of $B(I')$ and hence would not be in $ \partial B(I')$.
If $|L(G)| = k$ then by checking all possible choices for $G$ we see that $G$ must be after $F_k$ in our ordering.
If $|L(G)| < k$ then by the definition of our ordering $G$ is after $F_k$.
Therefore, $r(F_k)$ is a new face.

Any face of $F_k$ that does not contain $\cup_{j=1}^{s(m)} \{i_j\}$ is contained in one of the facets of $\C(I') \backslash \C(I)$ with $|L(F_k)| = d+1-2 \cdot s(m)$.
Therefore $r(F_k)$ is the unique minimal new face of $F_k$, as desired.

CLAIM 4:
Let $F$ be a facet in $\C(I') \cap \C(I)$ with $|L(F)| < d+1-2 \cdot s(m)$.
Then $r(F)$ is the unique minimal new face of $F$ in the above order.

The fact that $r(F)$ is a new face is proved as in claim two.

Now let $v \in r(F)$ and consider the face $F' := F - \{v\}$.
We need to show that $F' \subseteq G$ for some $G \in \C(I')$ such that $G$ is before $F$ in our ordering.
By the inductive hypothesis we know that there is an $H \in \C(I)$ such that $F' \subset H$ and $H$ is before $F$ in our ordering.
The result is trivial if $H \in \C(I')$, so assume $H \in \C(I) \backslash C(I')$.
If $H \backslash F'$ were an even number in $L(H)$ then $|L(F)|$ would be odd, contradicting the form of the elements of $\C(I)$.
If $H \backslash F' = \{k\}$ is an odd number in $L(H)$ then the facet $H' \in \C(I') \backslash C(I)$ with $|L(H')| = k-1$ contains $F'$.
In our ordering $H'$ comes before $F$ because of the parity of each facet's leftmost contiguous set (that is not the left endset).
When $H \backslash F'$ is not in $L(H)$ there is a face $H' \in \C(I') \backslash C(I)$ with $|L(H')| = d+1-2 \cdot s(m)$ such that $F' \subset H'$.
By our ordering $H'$ is before $F$, proving the claim in this case.
Therefore $r(F)$ is the unique minimal new face of $F$, completing claim four.

We have now shown that $C(I')$ is shellable with the claimed order and restriction faces.
Using this shelling, it is straightforward to check inductively that $C(I')$ has the desired $h$-vector.

\vspace{6pt}

Next we add additional facets to our shelling of $\C(J)$ to complete the second half of the desired $h$-vector.

Given a facet $\alpha^{-1}(m)$ of $B(J)$ in the form (\ref{alphainverse}), the co-dimension one faces obtained by removing $\{k\}$ where $k$ is even and $1 < k \leq d+1-2 \cdot s(m)$ are facets of $\partial B(J)$.
These are the facets we add during the second half of the shelling.
For a fixed $k$, call the collection of all such facets $A_k'$.
For each facet $F \in A_k'$ used in the second half of our shelling we will have
$$
r(F) = \left( \cup_{i=1}^{k-1}  \{i\} \right) 
\cup \left( \cup_{i=k/2}^{(d-1)/2- s(m) } \{2i+1\} \right) 
\cup   \left( \cup_{j=1}^{s(m)} \{i_j\} \right)
$$

Given a facet $\alpha^{-1}(m)$ of $B(J)$ we will not add every co-dimension one face of $\alpha^{-1}(m)$ of the form described above to our shelling.
However, if we choose not to add some co-dimension one face $F$ of $\alpha^{-1} (m)$ to our shelling, then any other co-dimension of face $F'$ of $\alpha^{-1} (m)$ with $|r(F')| > |r(F)|$ also can not be added to the shelling.
Additionally, for $m' \in J$ rev-lex before $m$ we will not allow the addition of any co-dimension one face $G$ of $\alpha^{-1} (m')$ such that $|r(G)| \geq |r(F)|$.

For each facet $F$ we are adding to the second half of our shelling there is a unique monomial $m \in J$ such that $F \in \alpha^{-1} (m)$.
We order our facets by the rev-lex order on the corresponding monomials in $J$ with the facets corresponding to the rev-lex larger monomials coming earlier in our ordering.
We order facets that have the same corresponding monomials by increasing size of their restriction faces.
We show that under these conditions we get a shelling order than extends the shelling of $\C(J)$.

Let $F \subset \alpha^{-1}(m)$ be a face added in the second half of the shelling.
We first show that $r(F)$ is a new face.

The only faces of the form (\ref{facetform}) that can contain $r(F)$ are contained in $\alpha^{-1} (n)$ and $\alpha^{-1} (n')$ for distinct monomials $n,n'$ both rev-lex before $m$.
Therefore $r(F)$ is not contained in any monomial in the first half of the shelling.
Also note that the only facets of $B(J)$ that contain $r(F)$ are equal to $\alpha^{-1} (m')$ for a monomial $m'$ that is rev-lex less than equal to $m$.
Therefore no facet in the second half of the shelling but before $F$ in our ordering contains $r(F)$.
Thus $r(F)$ is a new face.

Now let $v \in r(F)$.
We show that $F' := F - \{v\}$ is contained in some facet before $F$ in our shelling.

If $v < k$ and $v$ is even then the face $F' \cup \{k\}$ is a co-dimension one face of $\alpha^{-1}(m)$ and is in our shelling by the conditions on restriction face size.
This face gives the desired co-dimension one intersection.

If $v < k $ and $v$ is odd we have two cases
\begin{enumerate}
\item  The facet $F' \cup \{k\}$ is a facet in our shelling, proving the desired result.

\item  If $F' \cup \{k\}$ is not in $\C(J)$ then the facet $F' \cup \{k\} \cup \{w\}$ is in $B(J)$ where $w$ is the smallest number that is greater than $d+1- 2 \cdot s(m)$ and not in $F$.
Therefore the face $F' \cup \{w\}$ is in $B(J)$.
If $F' \cup \{w\}$ is in $\C(J)$ then we are done.
If not we let $w'$ be the smallest number greater than $w$ that is not in $F$.
We must have $F' \cup \{w\} \cup \{w'\}$ in $B(J)$.
If $F' \cup \{w'\}$ is in $\C(J)$ we are done.
If not we repeat the above process until we reach a facet that is in $\C(J)$ and contains $F'$.
\end{enumerate}

If $v \in r(F)$ and $v > k$ we have two cases.

\begin{enumerate}
\item   The facet $F' \cup \{k\}$ is a facet in our shelling, and we are done with this step.

\item If $F' \cup \{k\}$ is not in $\C(J)$ then the face $F' \cup \{k\} \cup \{w\}$ must be in $B(J)$ where $w$ is the smallest number that is greater than $v$ and is not in $F$.
The facet $F' \cup \{k\} \cup \{w\}$ is after $\alpha^{-1} (m)$ in the shelling order on $B(J)$ induced by the rev-lex ordering on $J$.
Hence, by our conditions on which facets must be added to the second half of our shelling, $F' \cup \{w\}$ must be a facet in the second half of the shelling and before $F$ in our ordering, completing the desired claim in this case.
\end{enumerate}

Finally we show that we get all of the claimed $h$-vectors using this construction.
Let $A_k'$ be as above.
For each face $F \in A_k'$ we know $F \cup \{k\}$ is an element of $B(J)$ and corresponds to a monomial in $J$.
Order the elements of $A_k'$ by the rev-lex order on these corresponding monomials with the rev-lex largest monomials first.
Let $A_k$ be the first $h_{(d+k-1)/2}$ elements of $A_k'$ using this ordering.

We must show that for any facet $\alpha^{-1}(m)$ of $B(J)$ and any even $k \geq 2$ if the face $\alpha^{-1}(m) \backslash \{k+2\}$ is in $A_{k+2}$ then $\alpha^{-1}(m) \backslash \{k\}$ is in $A_k$.

If there are no monomials $n \in J$ such that the degree of $n$ is $(d-k+1)/2$ and $n$ is after $m$ in the rev lex order, then the number elements in $A_{k}$ and $A_{k+2}$ that are in facets of $B(J)$ corresponding to monomials rev-lex larger than $m$ are equal.
By the non-increasing condition on the $h_i$ we have the desired result.

If there is a monomial $n \in J$ of degree $(d-k+1)/2$  with $n >_{rl} m$ then $J$ contains all of the monomials $n'$ such that the degree of $n'$ is less than or equal to $(d-k+1)/2$ and $n' <_{rl} m$.
The entries in the vector $(1,h_1-h_{d-1},h_2-h_{d-2}, \ldots , h_{(d-1)/2} - h_{(d+1)/2})$ are how many elements of each $A_k'$ are not selected for $A_k$.
Since this is an $M$-vector and for a monomial $N \in J$ the face $\alpha^{-1} (N)$ contains an element of $A_{k}'$ if and only if $N$ has degree less than or equal to $(d-k+1)/2$ the desired result is a consequence of the following claim

CLAIM 5:
Let $(1,a_1, \ldots , a_r)$ be an $M$-vector and define $b_k := 1 + a_1 + \cdots + a_k$.
Then $b_k^{<k>} \geq b_{k+1}$.

Since a truncation of an $M$-vector is still an $M$-vector it is sufficient to prove the claim for $k=r-1$.
Let $K$ be the compressed order ideal with $M$-vector $(1,a_1, \ldots , a_r)$.
Let $K'$ be the elements of $K$ of degree at most $r-1$.
Note that $|K'| = b_{r-1}$.
For $m \in K'$ write $m = \prod_{j=1}^{r-1} X_{i_j}$ where we take $X_0 = 1$.
Define $m^+ := \prod_{j=1}^{r-1} X_{i_j+1}$

Let $S' = \{m^+ : m \in K'\}$.
It is straightforward to check that for any $m \in S'$ and any degree $r$ monomial $m'$ with $m' <_{rl} m$ we have $m' \in S'$.
Hence $S'$ is the rev-lex first $b_{r-1}$ monomials of degree $r-1$.

For $m \in K$ write $m = \prod_{j=1}^{r} X_{i_j}$ where we take $X_0 = 1$.
Let $m^+ = \prod_{j=1}^{r} X_{i_j+1}$ and define $S = \{ m^+ : m \in K \}$.
By the same argument as for $S'$, the set $S$ is the rev-lex first $b_{r}$ monomials of degree $r$.

It is easy to check that for each monomial $n \in S$, dividing $n$ by any variable in $n$ gives a monomial in $S'$.
Therefore $b_r \leq  b_{r-1}^{<r-1>}$, as desired.
\vspace{6pt}

We now consider the case where $h_{(d-1)/2} - h_{(d-3)/2} < 0$.
In this case $ g_{(d-1)/2} =0$ which implies $h(\partial B(I)) = (1,h_1,h_2,\ldots, h_{(d-3)/2},h_{(d-3)/2}, h_{(d-3)/2},h_{(d-3)/2}, \ldots ,h_2,h_1,1)$.
Therefore, we alter our definition of the $G_i$.
\begin{align*}
(1,G_1, \ldots, &G_{(d-3)/2}, G_{(d-1)/2}, G_{(d+1)/2}) := \\
&(1,h_1-h_{d-1}, \ldots,  h_{(d-3)/2} - h_{(d+3)/2}, h_{(d-3)/2} - h_{(d+1)/2}, h_{(d-3)/2} - h_{(d-1)/2} )
\end{align*}
Let $J$ be the compressed order ideal with degree sequence $(1,g_1,g_2, \ldots , g_{(d-3)/2})$.
Using the same argument as for the previous case we build a shellable $(d-1)$-ball $\C(J)$ with $h$-vector $ (1,h_1,h_2,\ldots, h_{(d-3)/2},0, \ldots , 0)$.

To obtain a ball with the desired $h$-vector we make a slight alteration to the second half of the shelling described in the previous case.
Note that for each monomial $m \in J$ the face $\alpha^{-1}(m) \backslash \{1\}$ is in $\partial B(J)$.
Call the collection of all such facets $A_1'$.
Order the elements of $A_1'$ by the rev-lex order on the corresponding monomials of $J$ with the facets corresponding to the rev-lex largest monomials first.
Note that 
$$|A_1'| = 1 + g_1 + g_2 + \cdots + g_{(d-3)/2} = h_{(d-3)/2} > h_{(d-1)/2}$$
We may therefore define $A_1$ to be the first $h_{(d-1)/2}$ elements of $A_1'$.
For $k>1$ define the $A_k$ as in the previous case.

For any facet $F \in A_1$ and corresponding monomial $m \in J$ with $F =\alpha^{-1} (M) \backslash \{ 1\}$ define 
$$
r(F) = \left( \cup_{i=1}^{(d-1)/2- s(m) } \{2i+1\} \right) 
\cup   \left( \cup_{j=1}^{s(m)} \{i_j\} \right)
$$
Then arguing as above, adding the faces in each of the $A_i$ to the the ball $\C(J)$ results in a shellable ball with the desired $h$-vector.

\section*{Acknowledgements}
The author would like to thank Ed Swartz for his valuable advice and support.

\bibliographystyle{plain}
\bibliography{Ballsbib}

\end{document}